\newtheorem{theorem}{Theorem}
\newtheorem{corollary}{Corollary}
\newtheorem{proposition}{Proposition}
\newtheorem{lemma}{Lemma}
\theoremstyle{definition}
\theoremstyle{remark}
\numberwithin{equation}{section}
\newcommand{\ao}[1]{\tilde{#1}_{\omega}}
\newcommand{\aso}[1]{\tilde{#1}_{\sigma \omega}}
\newcommand{\as}[2]{\tilde{#1}_{\sigma^{#2} \omega}}
\newcommand{\aoa}[1]{{\tilde{#1}_{\omega}}^{\alpha}}
\newcommand{\aob}[1]{{\tilde{#1}_{\omega}}^{\beta_{ij}}}
\newcommand{\ttt}[1]{\tilde{#1}}
\newcommand{\aaa}[1]{{#1}^{\alpha}}
\newcommand{\bbb}[1]{{#1}^{\beta_{ij}}}
\newcommand{\bbbb}[1]{{#1}^{\beta}}
\newcommand{\at}[2]{\tilde{#1}_{#2}}
\begin{document}

\author{Supun T. Samarakoon}
\title{Generalized Grigorchuk's Overgroups as points on $\mathcal{M}_k$}

\maketitle

\begin{abstract}
	Following the construction from \cite{Gri84} we generalize the Grigorchuk's overgroup $\ttt{\mathcal{G}}$, studied in \cite{BG02} to the family $\{ \ao{G}, \omega \in \Omega = \{ 0, 1, 2 \}^\mathbb{N} \}$ of generalized Grigorchuk's overgroups. We consider these groups as 8-generated and describe the closure of this family in the space $\mathcal{M}_8$ of marked groups.
\end{abstract}

\section{Introduction}

Grigorchuk's space $\mathcal{M}_k$ of marked groups with $k (\geq 2)$ generators was introduced in 1984 \cite{Gri84}. It is a totally disconnected, compact metric space with complicated structure of isolated points as shown by Y. de Cornulier, L. Guyot and W. Pitsch \cite{CGP07} and non-trivial perfect kernel $\mathcal{K}$ that is homeomorphic to a Cantor set. The space also was studied in \cite{Cha00,CG05,BK19} and other articles.

The space of marked groups was used by Grigorchuck to show that his family $\{ G_\omega \}_{\omega \in \{0,1,2\}^\mathbb{N}}$ of groups of intermediate growth (between polynomial and exponential) constructed in \cite{Gri84} consist of infinitely presented groups (when $\omega$ is not virtually constant). Also, a modification of the construction lead him to show in \cite{Gri84}, that the family is closed and perfect subset of $\mathcal{M}_4$ and hence is homeomorphic to a Cantor set.

The further investigations showed usefulness of spaces $\mathcal{M}_k, k \geq 2$ for study of group properties such as (non-elementary) amenability and for constructions in group theory, in particular to study of IRS (invariant random subgroups) on a free group and other groups \cite{Bow15,BGN15}. 

Let $\Omega_2 \subset \Omega = \{0,1,2\}^\mathbb{N}$ be the set consisting of virtually constant sequences. If $\omega \in \Omega \setminus \Omega_2$, then $G_\omega $ has intermediate growth (between polynomial and exponential growth) as shown in \cite{Gri84}. In \cite{Gri84} it was shown that the closure of the set $\mathcal{Z} = \{G_\omega | \omega \in \Omega \setminus \Omega_2 \}$ in $\mathcal{M}_4$, denoted by $\overline{\mathcal{Z}}$, is a closed set without isolated points (hence homeomorphic to a Cantor set) and $\overline{\mathcal{Z}} \setminus \mathcal{Z}$ is a countable set consisting of virtually metabelian groups, one such group $G_\omega^\alpha$ (defined using an algorithm $\alpha$ for the word problem) for each $\omega \in \Omega_2$. So,
\[ \overline{\mathcal{Z}} = \mathcal{Z} \cup \{ G_\omega^\alpha | \omega \in \Omega_2 \} = \text{Cantor set.} \]

In \cite{BG02}, Bartholdi and Grigorchuk investigated the group $\ttt{\mathcal{G}}$ (known as the Grigorchuk's overgroup) whose definition is similar to the first Grigorchuk group $\mathcal{G} = G_{(012)^\infty}$. It contains $\mathcal{G}$, fail to be torsion (in contrast with $\mathcal{G}$), but has intermediate growth, much larger than $\mathcal{G}$ and share with $\mathcal{G}$ many properties (like to be branch, self-similar, just infinite, etc). The group $\ttt{\mathcal{G}}$ is important, in particular because as is shown by Y. Vorobets (private communication), it constitute a big part of the topological full group $[[(\Lambda,T)]]$ associated with substitutional dynamical system $(\Lambda,T)$ generated by the Lysenok's substitution $\sigma: a \mapsto aca, \quad b \mapsto d, \quad c \mapsto b, \quad d \mapsto c$, where $T$ denotes the shift map in the space $\Lambda = \{a,b,c,d\}^{\mathbb{Z}}$.

In this article we, analogously to \cite{Gri84}, introduce a family $\{ \ao{G} | \omega \in \Omega \}$ of generalized overgroups and describe the structure of the closure of the set $\mathcal{X} = \{ \ao{G} | \omega \in \Omega \}$ in $\mathcal{M}_8$, which happen to be much more complicated than in the case of classical Grigorchuk groups (see figure \ref{fig:structure}). 

\begin{figure}[t!]
	\centering
	\begin{tikzpicture}
	\draw (-0.5,-1) node(top)[rectangle,minimum height=7cm,minimum width=11cm,draw,dashed]{};
	\draw (0,0) node(top)[ellipse,minimum height=2cm,minimum width=7cm,draw]{};
	\draw (node cs:name=top,anchor=90) -- (0,0);
	\draw (node cs:name=top,anchor=188) -- (0,0);
	\draw (node cs:name=top,anchor=-8) -- (0,0);
	\draw [dashed,opacity=.2] (-3.5,0) -- (-3.5,-3);
	\draw [dashed,opacity=.2] (-3.5,-3) arc (180:360:3.5 and 1);
	\draw [dashed,opacity=.2] (3.5,-3) -- (3.5,0);  
	\draw (-.5,0) ellipse (5 and 1.2);
	\draw (-5.5,-1.5) arc (180:360:5 and 1.2);
	\draw [dashed,opacity=.08] (4.5,-3) arc (0:180:5 and 1.2);
	\draw (-5.5,-3) arc (180:360:5 and 1.2);
	\draw (-5.5,-3) -- (-5.5,0);
	\draw (4.5,-3) -- (4.5,0);
	\node(X0)[label=center:{$\mathcal{X}_0$}] at (-2,.4) {};
	\node(X1)[label=center:{$\mathcal{X}_1$}] at (-4.5,-1.5) {};
	\node(X2)[label=center:{$\mathcal{X}_2$}] at (2.5,2) {};
	\node(X1a)[label=center:{$\aaa{\mathcal{X}}_1$}] at (0,-.5) {};
	\node(X2a)[label=center:{$\aaa{\mathcal{X}}_2$}] at (2,.4) {};
	\node(X2b)[label=center:{$\bbbb{\mathcal{X}}_2$}] at (-4.5,-3) {};
	\filldraw[fill,even odd rule,opacity=.1]
	(-.5-5.5,-1-3.5) rectangle (-.5+5.5,-1+3.5)
	(-5.5,-1.5) arc (180:360:5 and 1.2) -- (4.5,-3) -- (4.5,-3) arc (0:-180:5 and 1.2) -- cycle
	(0,0) ellipse (3.5 and 1)
	(node cs:name=top,anchor=90) arc (90:206:3.5 and 1) -- (0,0);
	\end{tikzpicture}
	\caption{Structure of topological closure of $\mathcal{X} = \mathcal{X}_0 \cup \mathcal{X}_1 \cup \mathcal{X}_2$ in $\mathcal{M}_8$}
	\label{fig:structure}
\end{figure}
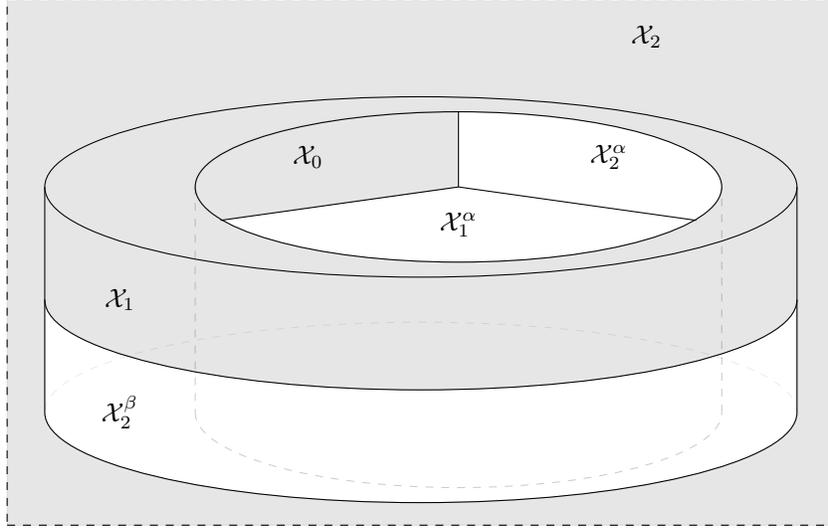

In 1957, M. Day asked whether all amenable groups are elementary amenable \cite{Day57}. It was answered negatively, by the construction of groups of intermediate growth \cite{Gri84}. Next examples came from theory of self-similar groups. One such group is the Basilica group \cite{GZ02}, which is amenable but not sub-exponentially amenable \cite{BV05}. Most recent examples are topological full groups associated with minimal Cantor system, which were used to construct finitely generated simple non-elementary amenable groups \cite{JM13}.

In 1996, Stepin observed that constructions similar to the one in \cite{Gri84}, can lead to new families of non-elementary amenable groups \cite{Ste96}. Namely, if one finds suitable Cantor set of groups containing a countable dense subset of (perhaps elementary) amenable groups and a co-meager set consisting of non-elementary groups, then standard argument based on Baire category insure that there is a co-meager set of non-elementary amenable groups. (See \cite{WW17} for non-constructive proof of existence of non-elementary amenable groups using set theoretic approach.) 

Constructions in this article are based on algorithms $\alpha$ and $\beta_{ij}$ for $i,j \in \{0,1,2\}$, which will be defined in section \ref{algo}. The algorithm $\alpha$ is a branch type algorithm, similar to the one introduced in \cite{Gri84}. Algorithms $\beta_{ij}$ were introduced in order to construct `new' class of modified overgroups (see section \ref{modified}). We hope that the methods introduced here will contribute to the study in the direction of constructing new example of non-elementary amenable groups.

Let $\Omega_0, \Omega_1 \subset \Omega$, where $\Omega_0$ is the set of all sequences with all three symbols occurring infinitely often and $\Omega_1 = \Omega \setminus (\Omega_0 \cup \Omega_2)$ is the set of all sequences with exactly two symbols occurring infinitely often. We use the word `oracle' to represent a sequence in $\Omega$.

Using algorithms $\alpha$ and $\beta_{ij}$ for $i,j \in \{0,1,2\}$, we define modified overgroups $\ao{G}^\alpha$ and $\ao{G}^{\beta_{ij}}$ (see section \ref{modified}) as those for which the word problem is decidable by the corresponding algorithm, assuming that the oracle $\omega$ is known. We define following subsets of $\mathcal{M}_8$:
\begin{align}\label{equation:sets}
	\mathcal{X}  = &\{ (\ao{G},\ao{S}) \}_{\omega \in \Omega} \text{ ; shaded region in figure \ref{fig:structure}} \nonumber \\ 
	\mathcal{X}_i  = &\{ (\ao{G},\ao{S}) \}_{\omega \in \Omega_i} \text{ ; for } i = 0, 1, 2 \nonumber\\
	\mathcal{X}_i^\alpha = &\{ (\ao{G},\ao{S}) \}_{\omega \in \Omega_i} \text{ ; for } i = 1, 2 \\
	\mathcal{X}_2^\beta  = &\{ (\ao{G}^\beta,\ao{S}^\beta) \; | \; \beta \in \{ \beta_{01}, \beta_{12}, \beta_{20} \} \; \}_{\omega \in \Omega_2}  \nonumber\\
	\mathcal{Y}  = &\mathcal{X}_0 \cup \mathcal{X}_1^\alpha \cup \mathcal{X}_2^\alpha \text{ ; middle cylinder in figure \ref{fig:structure}}  \nonumber
\end{align}

In the following text, the topological closure and the set of limit points of a set $V$ will be denoted by $\overline{V},V_\sharp$, respectively.

\begin{theorem}\label{distinct-main}
	The sets $\mathcal{X}_0, \mathcal{X}_1, \mathcal{X}_2, \aaa{\mathcal{X}}_1, \aaa{\mathcal{X}}_2,$ and $\bbbb{\mathcal{X}}_2$ are mutually disjoint subsets of $\mathcal{M}_8$. In any set other than $\bbbb{\mathcal{X}}_2$, different corresponding oracles $\omega$ give rise to different groups. In $\bbbb{\mathcal{X}}_2$, there are two different groups for each corresponding oracle $\omega$.
\end{theorem}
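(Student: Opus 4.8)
The plan is to work throughout at the level of relation subgroups of the free group $F_8$. Each point of $\mathcal{M}_8$ we must handle is the kernel of the canonical epimorphism $F_8 \twoheadrightarrow \ao{G}$ (respectively onto $\aoa{G}$ or $\aob{G}$) determined by the marking, and two such points coincide exactly when these kernels agree. Thus every clause of the theorem becomes an assertion that two kernels are equal, or that one strictly contains the other, and I would establish each by exhibiting explicit words of $F_8$ lying in one kernel but not in the other. Write $N_\omega, N_\omega^\alpha, N_\omega^{\beta_{ij}}$ for the kernels attached to $\ao{G}, \aoa{G}, \aob{G}$.

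The first and central step is to read the oracle off the marked group. Using the action on the rooted binary tree from the definitions, I would produce for each level $n$ an explicit family of words in $F_8$ (formed from the tree-generators and the rooted generator) whose images are trivial precisely when $\omega_n$ takes a prescribed value; testing these relations recovers the symbols $\omega_1, \omega_2, \dots$ one at a time. The point is that each such word is supported on the first $n$ levels of the tree, whereas the algorithms $\alpha$ and $\beta_{ij}$ modify the group only through its deep (asymptotic) relations; hence these oracle-reading relations hold in $\aoa{G}$ and in $\aob{G}$ exactly when they hold in $\ao{G}$. Consequently the whole sequence $\omega$ is recoverable from any of the six families, and since $\Omega_0, \Omega_1, \Omega_2$ partition $\Omega$ this yields at once the mutual disjointness of $\mathcal{X}_0, \mathcal{X}_1, \mathcal{X}_2$ together with the injectivity of $\omega \mapsto (\ao{G}, \ao{S})$ and $\omega \mapsto (\aoa{G}, \aoa{S})$ on the relevant symbol classes.

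With the oracle recovered, two groups from different families can coincide only when they share the same $\omega$ and the same symbol class, so it remains to separate the constructions built over a fixed oracle. By the correctness statements for the algorithms (section \ref{modified}) one has $N_\omega^\alpha = N_\omega$ for $\omega \in \Omega_0$, while $N_\omega \subsetneq N_\omega^\alpha$ for $\omega \in \Omega_1 \cup \Omega_2$; I would make the strict containment effective by naming a word that $\alpha$ declares trivial but that acts nontrivially on the tree in $\ao{G}$. This gives $\mathcal{X}_1 \cap \aaa{\mathcal{X}}_1 = \varnothing$ and $\mathcal{X}_2 \cap \aaa{\mathcal{X}}_2 = \varnothing$, and the analogous comparison of $N_\omega^{\beta_{ij}}$ against $N_\omega$ and $N_\omega^\alpha$ separates $\bbbb{\mathcal{X}}_2$ from both $\mathcal{X}_2$ and $\aaa{\mathcal{X}}_2$, completing the disjointness of all six sets.

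The remaining assertion, that $\bbbb{\mathcal{X}}_2$ carries exactly two groups over each oracle, is where I expect the real difficulty. Here no soft invariance argument suffices: I would have to compute, for a virtually constant $\omega$ with a given eventual symbol, the precise extra relations imposed by each of $\beta_{01}, \beta_{12}, \beta_{20}$, and show that two of the three algorithms produce the same kernel while the third differs, so that $\{N_\omega^{\beta_{01}}, N_\omega^{\beta_{12}}, N_\omega^{\beta_{20}}\}$ has exactly two elements. This $3 \to 2$ collapse depends on the specific combinatorics of the $\beta_{ij}$ and on the interaction between the pair of indices and the constant tail of $\omega$, and verifying it by direct relation comparison is the main obstacle of the proof.
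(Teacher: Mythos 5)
Your overall architecture---identify points of $\mathcal{M}_8$ with normal subgroups of $F_8$, recover the oracle from the kernel, then separate the different constructions over a fixed oracle---is reasonable and runs roughly parallel to the paper, which recovers the oracle by restricting to the embedded classical groups $G_\omega$, $G_\omega^\alpha$ and citing \cite{Gri84}, separates $\mathcal{X}_1,\mathcal{X}_2$ from $\aaa{\mathcal{X}}_1,\aaa{\mathcal{X}}_2,\bbbb{\mathcal{X}}_2$ by growth type (Corollary \ref{growth-main}), and separates $\aaa{\mathcal{X}}_2$ from $\bbbb{\mathcal{X}}_2$ by an explicit word. But your key step is under-justified: the claim that the oracle-reading relations ``hold in $\aoa{G}$ and in $\aob{G}$ exactly when they hold in $\ao{G}$'' because they are supported on finitely many levels is not automatic. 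By Proposition \ref{compare} the kernel of $F_8\twoheadrightarrow\aoa{G}$ is \emph{strictly} contained in that of $F_8\twoheadrightarrow\ao{G}$, and whether a word of length $L$ is a relation in one group versus the other is decided precisely by its nucleus at depth about $\log_2 L$ --- exactly where the two groups can disagree. You would need to arrange each detecting word so that its nucleus is entirely trivial when the symbol takes the prescribed value (hence a relation in every version) and contains a never-trivial letter such as $a$ or $x$ otherwise; this is doable but is real work that the proposal waves at, and which the paper avoids by quoting \cite{Gri84}.

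The more serious problem is the final clause, where you have misread what must be proved. For $\omega\in\Omega_2$ with eventual symbol $i$, only \emph{two} of the three algorithms are defined: $\beta_{jk}$ requires that $i$ occur only finitely often in $\omega$, which fails. So $\bbbb{\mathcal{X}}_2$ contains over each such $\omega$ exactly the two groups $\ttt{G}_\omega^{\beta_{ij}}$ and $\ttt{G}_\omega^{\beta_{ik}}$, and the assertion is that these two are \emph{distinct} --- there is no ``$3\to2$ collapse'' of three kernels to establish. The paper proves the distinctness (Proposition \ref{QR}) by exhibiting a word $W(ij)$ whose decomposition has nucleus consisting of $1$'s and $e_{ij}$'s, not all trivial, so that it is a relation under $\beta_{ij}$ but not under $\beta_{ik}$ or $\alpha$. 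Thus the step you single out as the main obstacle is not needed at all, while the step that is actually needed is the ``analogous comparison'' you mention only in passing and never carry out; as written, the proposal does not deliver the last sentence of the theorem.
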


\begin{theorem}\label{closure}
	.
	\begin{enumerate}
		\item $\overline{\mathcal{X}} = \mathcal{X}_\sharp \sqcup \mathcal{X}_2$, where the set $\mathcal{X}_2$ consists of the set of isolated points of $\mathcal{X}$.
		\item ${\mathcal{X}}_\sharp, \mathcal{Y}$ are homeomorphic to a Cantor set. 
		\item Furthermore we have following relations: 
		\begin{enumerate}
			\item $\mathcal{Y} = (\mathcal{X}_0)_\sharp = (\aaa{\mathcal{X}}_1)_\sharp = (\aaa{\mathcal{X}}_2)_\sharp$.
			\item $\mathcal{X}_\sharp = \mathcal{Y} \cup \mathcal{X}_1 \cup \bbbb{\mathcal{X}}_2 = (\mathcal{X}_1)_\sharp = (\bbbb{\mathcal{X}}_2)_\sharp = (\mathcal{X}_2)_\sharp$.
		\end{enumerate}
	\end{enumerate}
\end{theorem}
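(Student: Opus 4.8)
The plan is to reduce the whole statement to a single combinatorial device: a convergence criterion in $\mathcal{M}_8$ phrased in terms of oracles. Recall that $\mathcal{M}_8$ is compact, metrizable and totally disconnected, and that two marked groups are close exactly when their sets of relations of bounded length agree. Using the self-similar recursion of $\ao{G}$ under the shift $\sigma$ from the construction, I would first record the key fact that the relations of $(\ao{G},\ao{S})$ visible inside the ball of a given radius $r$ are computed by a fixed local rule from a finite prefix $\omega_1\cdots\omega_{m(r)}$ of the oracle, where $m(r)\to\infty$, together with the data of which of the three symbols have already appeared. Since $\Omega$ is compact one may always pass to a convergent subsequence of oracles, so every accumulation in $\mathcal{X}$ is realized by some $\omega^{(n)}\to\omega$; the whole problem then becomes the exact identification of the limit marked group among $\tilde G_\omega$, $\tilde G_\omega^{\alpha}$, and $\tilde G_\omega^{\beta_{ij}}$.

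For part (1) I would prove $\mathcal{X}\setminus\mathcal{X}_\sharp=\mathcal{X}_2$ by establishing the two inclusions. If $\omega\in\Omega_2$, then by the structure theory developed earlier $\ao{G}$ is (virtually) metabelian and finitely presented; a finitely presented, finitely discriminated marked group is isolated in $\mathcal{M}_8$, so $(\ao{G},\ao{S})$ is an isolated point of $\mathcal{X}$, and I would make this explicit by exhibiting a finite set of relations holding in $\ao{G}$ but violated, on a small enough ball, by every other member of $\mathcal{X}$. Conversely, if $\omega\in\Omega_0\cup\Omega_1$ I would write down distinct oracles $\omega^{(n)}\to\omega$ that agree with $\omega$ on prefixes of length $n$ and preserve the relevant tail features (staying inside $\Omega_0$, resp.\ inside the same two-symbol class of $\Omega_1$); by the prefix criterion $\tilde G_{\omega^{(n)}}\to\ao{G}$, so $(\ao{G},\ao{S})$ is a genuine limit point. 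Combined with $\overline{\mathcal{X}}=\mathcal{X}\cup\mathcal{X}_\sharp$ and $\mathcal{X}_2\cap\mathcal{X}_\sharp=\emptyset$ this yields $\overline{\mathcal{X}}=\mathcal{X}_\sharp\sqcup\mathcal{X}_2$.

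Part (3) is the heart of the argument and I expect it to be the main obstacle. The claim to establish is: for $\omega^{(n)}\to\omega$ with cofinitely many $\omega^{(n)}$ distinct, the limit is $\ao{G}$ when $\omega\in\Omega_0$; the $\alpha$-modification $\tilde G_\omega^{\alpha}$ when $\omega\in\Omega_1\cup\Omega_2$ and the approximants carry all three symbols arbitrarily far out (branch type); and the $\beta_{ij}$-modification $\tilde G_\omega^{\beta_{ij}}$ when $\omega\in\Omega_2$ and the approximants are eventually supported on the two symbols $i,j$. The mechanism is that as symbols cease to occur in the tail of the approximants, the radius-$r$ relation content forces certain generators to become trivial or torsion in the limit precisely according to the branch algorithm $\alpha$ (resp.\ the pair-dependent algorithm $\beta_{ij}$), which is exactly the content of the modified-overgroup definitions; the subscript $ij$ records the persisting pair and accounts for the two distinct groups per oracle in $\bbbb{\mathcal{X}}_2$ noted in Theorem \ref{distinct-main}. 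The delicate points, which must be handled by a careful portrait/finite-state computation, are that $\Omega_0$-approximations always select the $\alpha$-branch and never a $\beta_{ij}$-branch, that the $\beta_{ij}$ limits are reachable only through two-symbol-persistent approximations toward $\Omega_2$, and that no limit groups outside the claimed list occur. Granting this, $(\mathcal{X}_0)_\sharp=(\aaa{\mathcal{X}}_1)_\sharp=(\aaa{\mathcal{X}}_2)_\sharp=\mathcal{Y}$ follows since $\Omega_0$-type approximations produce exactly $\mathcal{X}_0\cup\aaa{\mathcal{X}}_1\cup\aaa{\mathcal{X}}_2$; while $\Omega_1$- and $\Omega_2$-type approximations additionally reach $\mathcal{X}_1$ (self-accumulation within two-symbol classes) and $\bbbb{\mathcal{X}}_2$, giving $(\mathcal{X}_1)_\sharp=(\bbbb{\mathcal{X}}_2)_\sharp=(\mathcal{X}_2)_\sharp=\mathcal{Y}\cup\mathcal{X}_1\cup\bbbb{\mathcal{X}}_2=\mathcal{X}_\sharp$. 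Throughout, the injectivity statements of Theorem \ref{distinct-main} are used to place each limit into the correct disjoint piece.

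Finally, part (2) is a formal consequence of part (3) together with Brouwer's characterization (a nonempty, compact, perfect, totally disconnected metric space is homeomorphic to the Cantor set). Both $\mathcal{X}_\sharp$ and $\mathcal{Y}=(\mathcal{X}_0)_\sharp$ are derived sets, hence closed, hence compact as closed subsets of $\mathcal{M}_8$, and they are totally disconnected as subspaces of $\mathcal{M}_8$; they are nonempty by part (3). For perfectness of $\mathcal{Y}$: every point of $\mathcal{Y}$ is a limit of points of $\mathcal{X}_0\subseteq\mathcal{Y}$, so $\mathcal{Y}\subseteq(\mathcal{Y})_\sharp$, and $\mathcal{Y}$ closed gives the reverse inclusion, whence $(\mathcal{Y})_\sharp=\mathcal{Y}$. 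For $\mathcal{X}_\sharp=\mathcal{Y}\cup\mathcal{X}_1\cup\bbbb{\mathcal{X}}_2$, each of the three constituent families accumulates onto itself inside $\mathcal{X}_\sharp$ by the equalities of part (3), so every point of $\mathcal{X}_\sharp$ is a limit of other points of $\mathcal{X}_\sharp$, giving $(\mathcal{X}_\sharp)_\sharp=\mathcal{X}_\sharp$. Thus both sets are Cantor sets.
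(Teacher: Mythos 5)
Your overall architecture matches the paper's: pass to a convergent subsequence of oracles by compactness of $\Omega$, use a prefix criterion (the paper's Proposition \ref{oande}) to identify each limit as one of $\ao{G}$, $\aoa{G}$, $\aob{G}$ according to which symbols persist in the tails of the approximants (the paper's Lemma \ref{lim}), read off part (3) from explicit approximating sequences, and get part (2) from Brouwer's characterization of the Cantor set. The "delicate points" you defer to a "careful portrait/finite-state computation" are exactly Proposition \ref{oande}; deferring them is acceptable in a sketch, but be aware that this is where essentially all of the content lives.

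There is, however, one concretely wrong step, in the inclusion $\mathcal{X}_2\cap\mathcal{X}_\sharp=\emptyset$ of part (1). You invoke the Cornulier--Guyot--Pitsch criterion (finitely presented and finitely discriminable $\Rightarrow$ isolated in $\mathcal{M}_8$). The groups in $\mathcal{X}_2$ are infinite and virtually abelian --- for instance $\tilde{G}_{0^\infty}\cong D_\infty$ --- so they are \emph{not} finitely discriminable: the normal subgroups $\langle t^n\rangle$ of the translation part intersect trivially any fixed finite subset of $G\setminus\{1\}$ for $n$ large. Hence these groups are not isolated points of $\mathcal{M}_8$ (indeed $D_\infty$ is a limit of finite dihedral groups); they are isolated only within $\overline{\mathcal{X}}$, which is a strictly weaker statement and cannot be obtained from that criterion. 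Your fallback --- ``exhibit a finite set of relations holding in $\ao{G}$ but violated by every other member of $\mathcal{X}$'' --- is the right target but is not carried out, and proving it requires precisely the limit classification: if $\omega^{(n)}\to\eta\in\Omega_2$ and $\ttt{G}_{\omega^{(n)}}$ converges, the limit is $\ttt{G}_\eta^{\alpha}$ or $\ttt{G}_\eta^{\beta_{ij}}$, each of which surjects onto $\ttt{G}_\eta$ with nontrivial kernel (Proposition \ref{compare}) and hence lies in $\mathcal{Y}\cup\mathcal{X}_1\cup\bbbb{\mathcal{X}}_2$, disjoint from $\mathcal{X}_2$ by Theorem \ref{distinct-main}. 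That is the paper's argument, and you should replace the isolated-point criterion by it.
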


It is worth to mention that the limit groups that appear in \cite{Gri84} are of the lamplighter type and one of them (``building block'') is a 2-extension of the lamplighter group $\mathcal{L} = \mathbb{Z}_2 \wr \mathbb{Z}$ \cite{BG14}. In our situation the lamplighter group also plays an important role and the ``building blocks'' constitute the group $\mathcal{L}$ as well as $\mathcal{L}_3 := \mathbb{Z}_2^3 \wr \mathbb{Z}$ and their direct products.

\begin{theorem}\label{modified-main}
	Let $\{ i,j,k \} = \{0,1,2\} $.
	\begin{enumerate}
		\item Let $\omega \in \Omega_2$ and let $N$ be the smallest index such that only $i$ appear after $N$. Then $\ao{G}^\alpha$ is commensurable to $(\ttt{G}_{i^\infty}^\alpha)^{2^N}$, which is virtually $(\mathcal{L}_3)^{2^N}$. Therefore $\ao{G}^\alpha$ is elementary amenable and of exponential growth. \label{alpha-omega_2}
		\item Let $\omega \in \Omega_2$ and let $N$ be the smallest index such that only $i$ appear after $N$. Then $\ao{G}^{\beta_{ij}}$ is commensurable to $(\ttt{G}_{i^\infty}^{\beta_{ij}})^{2^N}$, which is virtually $(\mathcal{L})^{2^N}$.Therefore $\ao{G}^{\beta_{ij}}$ is elementary amenable and of exponential growth. \label{beta-omega_2}
		\item Let $\omega \in \Omega_1$ and let $N$ be the smallest index such that no $k$ appear after $N$. Then $\ao{G}^\alpha$ is commensurable to $(\as{G}{N}^\alpha)^{2^N}$. $\as{G}{N}^\alpha$ contains $\mathcal{L}$ as a subgroup and is an extension of a non elementary amenable group by an abelian group. Therefore $\ao{G}^\alpha$ is non elementary amenable and of exponential growth. \label{alpha-omega_1}
	\end{enumerate}
\end{theorem}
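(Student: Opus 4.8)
The plan is to prove Theorem \ref{modified-main} by reducing each modified overgroup to a concrete, well-understood group via a commensurability argument, then reading off amenability and growth from the structure of that concrete group. The three parts share a common skeleton: the self-similar structure of $\ao{G}$ gives an embedding into a wreath-type product indexed by the vertices of the rooted binary tree, and iterating the section (restriction) maps $N$ times decomposes the group along the first $N$ levels. The key point is that applying the decidability algorithm ($\alpha$ or $\beta_{ij}$) repeatedly peels off the first $N$ symbols of the oracle, so that the stabilizer of the $N$-th level injects (with finite index) into a direct product of $2^N$ copies of a group built from the shifted oracle $\sigma^N\omega$. For parts \eqref{alpha-omega_2} and \eqref{beta-omega_2} the shifted oracle is the constant sequence $i^\infty$, so the building block is $\ttt{G}_{i^\infty}^\alpha$ (resp. $\ttt{G}_{i^\infty}^{\beta_{ij}}$); for part \eqref{alpha-omega_1} it is $\as{G}{N}^\alpha$.

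First I would establish the level-$N$ decomposition. The stabilizer $\mathrm{St}(N)$ of the $N$-th level in $\ao{G}$ has index $2^N$ (acting transitively on the $2^N$ vertices at level $N$), and the section homomorphism $\psi_N \colon \mathrm{St}(N) \to \prod_{v} \ao{G}|_v$ sends an element to the tuple of its restrictions at the level-$N$ vertices. The content of the modified construction is that, because the algorithm decides the word problem given the oracle, each section $\ao{G}|_v$ at a level-$N$ vertex is precisely the modified overgroup associated to the shifted oracle $\sigma^N\omega$ with the same algorithm. Since $\mathrm{St}(N)$ has finite index in $\ao{G}$, commensurability with $(\tilde{G}_{\sigma^N\omega}^{\,\bullet})^{2^N}$ follows once I verify that $\psi_N$ is injective on $\mathrm{St}(N)$ and that its image has finite index in the full product — the latter via the branching property, which guarantees that a finite-index subgroup of each factor is fully contained in the image.

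Next I would identify the building block for each case. For parts \eqref{alpha-omega_2} and \eqref{beta-omega_2}, choosing $N$ to be the smallest index after which only $i$ occurs makes $\sigma^N\omega = i^\infty$, so the building block is the overgroup over the constant oracle. I would then show directly (by writing out the finitely many generators and their self-similar sections under the constant substitution) that $\ttt{G}_{i^\infty}^\alpha$ is virtually $\mathcal{L}_3 = \mathbb{Z}_2^3 \wr \mathbb{Z}$ and $\ttt{G}_{i^\infty}^{\beta_{ij}}$ is virtually $\mathcal{L} = \mathbb{Z}_2 \wr \mathbb{Z}$; the constant oracle collapses the recursion to a lamplighter-type structure, and the difference in the exponent ($\mathbb{Z}_2^3$ versus $\mathbb{Z}_2$) reflects how many of the extra overgroup generators survive under $\alpha$ versus $\beta_{ij}$. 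Virtual (elementary) amenability and exponential growth of $\mathcal{L}_3^{2^N}$ and $\mathcal{L}^{2^N}$ then follow since the lamplighter group is elementary amenable of exponential growth and both properties pass to commensurable groups. For part \eqref{alpha-omega_1}, with $\sigma^N\omega$ now a sequence in which only two symbols (not $k$) occur infinitely often, I would exhibit $\mathcal{L} \hookrightarrow \as{G}{N}^\alpha$ and present $\as{G}{N}^\alpha$ as an extension $1 \to A \to \as{G}{N}^\alpha \to Q \to 1$ with $A$ abelian and $Q$ non-elementary amenable; non-elementary amenability then propagates upward because elementary amenability is closed under extensions, so a group mapping onto a non-elementary amenable quotient (modulo abelian kernel) cannot itself be elementary amenable.

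The main obstacle I expect is the middle step of part \eqref{alpha-omega_1}: exhibiting the quotient $Q$ and proving it is non-elementary amenable. Amenability of $\as{G}{N}^\alpha$ should follow from the general amenability of groups of subexponential growth type or from the amenability already established for the ambient family, but \emph{non-elementary} amenability is a genuinely delicate negative statement. I would attack it by identifying $Q$ with (a relative of) one of the Grigorchuk groups $G_{\sigma^N\omega}$ for a non-eventually-constant oracle, whose intermediate growth precludes elementary amenability by the Chou–Osin bounds on the growth of elementary amenable groups; the care lies in checking that the abelian kernel $A$ is genuinely small enough that the non-elementary-amenable quotient is not destroyed, and that the two-symbols-infinitely-often condition on $\sigma^N\omega$ is exactly what is needed for the relevant section group to retain intermediate growth rather than collapsing to a lamplighter.
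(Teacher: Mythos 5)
Your proposal follows essentially the same route as the paper: commensurability of the modified overgroup with the $2^N$-th power of the group over the shifted oracle $\sigma^N\omega$, followed by structural identification of the building blocks ($\ttt{G}_{i^\infty}^\alpha$ virtually $\mathcal{L}_3$, $\ttt{G}_{i^\infty}^{\beta_{ij}}$ virtually $\mathcal{L}$, and for $\omega\in\Omega_1$ an extension of the intermediate-growth overgroup $\ao{G}$ by $\bigoplus_{\mathbb{Z}}\mathbb{Z}_2$ containing $\mathcal{L}$), which is exactly how the paper argues. The only cosmetic differences are that the paper gets the $\beta_{ij}$ case by an explicit isomorphism onto the modified Grigorchuk group $G_{i^\infty}^\alpha$ (citing known results) rather than a direct computation, and that in your last step the relevant closure property is closure of elementary amenability under \emph{quotients} (not extensions), a slip that does not affect the argument.
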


It is known that the groups in $\mathcal{X}_2$ have polynomial growth and the groups in $\mathcal{X}_0$ and $\mathcal{X}_1$ have intermediate growth  (see \cite{Sam19}). As a consequence of theorem \ref{modified-main}, we have;

\begin{corollary}\label{growth-main}
	Groups in the set $\mathcal{X}_0 \cup \mathcal{X}_1$ are of intermediate growth, groups in the set $\mathcal{X}_2$ are of polynomial growth, and groups in $\aaa{\mathcal{X}}_1 \cup \aaa{\mathcal{X}}_2 \cup \bbbb{\mathcal{X}}_2$ are of exponential growth.
\end{corollary}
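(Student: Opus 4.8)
The plan is to dispose of the three growth regimes one at a time, exploiting that the growth type (polynomial, intermediate, or exponential) of a finitely generated group is invariant under commensurability and is preserved under fixed finite direct powers, so that all the substance is already contained in Theorem~\ref{modified-main} together with the cited results of \cite{Sam19}.

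For the intermediate and polynomial assertions there is nothing new to establish: as recalled immediately above, the groups $(\ao{G},\ao{S})$ with $\omega \in \Omega_0 \cup \Omega_1$ have intermediate growth and those with $\omega \in \Omega_2$ have polynomial growth by \cite{Sam19}. I would simply cite these and concentrate on the exponential estimate for the modified overgroups in $\aaa{\mathcal{X}}_1 \cup \aaa{\mathcal{X}}_2 \cup \bbbb{\mathcal{X}}_2$.

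Here the central input is that the lamplighter groups $\mathcal{L} = \mathbb{Z}_2 \wr \mathbb{Z}$ and $\mathcal{L}_3 = \mathbb{Z}_2^3 \wr \mathbb{Z}$ are of exponential growth, a standard fact for restricted wreath products of this shape, and that for a fixed integer $m$ the $m$-fold direct power of a group of exponential growth is again of exponential growth. Granting this, I would run through the three families according to the oracle $\omega$. If $\omega \in \Omega_2$, then $\ao{G}^\alpha \in \aaa{\mathcal{X}}_2$ is, by Theorem~\ref{modified-main}, part~\ref{alpha-omega_2}, commensurable to $(\ttt{G}_{i^\infty}^\alpha)^{2^N}$, which is virtually $(\mathcal{L}_3)^{2^N}$, and $\ao{G}^{\beta_{ij}} \in \bbbb{\mathcal{X}}_2$ is, by part~\ref{beta-omega_2}, commensurable to $(\ttt{G}_{i^\infty}^{\beta_{ij}})^{2^N}$, which is virtually $(\mathcal{L})^{2^N}$; in both cases transporting exponential growth along commensurability and finite powers yields the claim. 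If instead $\omega \in \Omega_1$, then $\ao{G}^\alpha \in \aaa{\mathcal{X}}_1$ is, by part~\ref{alpha-omega_1}, commensurable to $(\as{G}{N}^\alpha)^{2^N}$, where $\as{G}{N}^\alpha$ already contains a copy of $\mathcal{L}$; since a finitely generated group containing a finitely generated subgroup of exponential growth is itself of exponential growth, the conclusion again follows. As every element of $\aaa{\mathcal{X}}_1 \cup \aaa{\mathcal{X}}_2 \cup \bbbb{\mathcal{X}}_2$ is determined by its oracle (and, in the $\beta$ case, by the admissible index pair $ij$), these three cases are exhaustive.

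I do not expect a genuine obstacle: the corollary is essentially a bookkeeping consequence of Theorem~\ref{modified-main}. The only point meriting a sentence of care is the transfer of the exponential growth type—passing to a finite-index subgroup, to a finite extension, and to a fixed direct power each preserves it—so that ``commensurable to a group virtually $(\mathcal{L})^{2^N}$ or $(\mathcal{L}_3)^{2^N}$'' really does force exponential growth. The real work, namely the commensurability statements themselves, resides in Theorem~\ref{modified-main} and is here taken as given.
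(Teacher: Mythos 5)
Your proposal is correct and follows essentially the same route as the paper, which derives the corollary directly from the growth results of \cite{Sam19} for $\mathcal{X}_0 \cup \mathcal{X}_1$ and $\mathcal{X}_2$, and from Theorem~\ref{modified-main} (whose three parts already assert exponential growth) for the modified overgroups. The only content you add is the explicit remark that exponential growth passes through commensurability, finite direct powers, and finitely generated subgroups of exponential growth, which the paper leaves implicit.
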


If $G$ is a finitely presented group in $\mathcal{M}_k$ with finite set of relations $R$, such that $G_n \Rightarrow G$, then $G$ maps onto $G_n$ for sufficiently large $n$. This can be obtained by considering the ball of radius $n$ centered at identity of the Cayley graph of $G$, where $n/2$ is larger than the maximum of lengths of relations in $R$. In particular, the growth rate of $G$ is not less than the growth growth rate of $G_n$. By theorem \ref{closure}, for $\omega$ non virtually constant, there is a sequence $\{ G_n \}$ of groups of exponential growth (by corollary \ref{growth-main}) in $\mathcal{X}_2^\beta$ converging to $\ao{G}$. Therefor by above argument, we get following corollary:

\begin{corollary}\label{infinitely presented}
	$\ao{G}$ is infinitely presented for $\omega \in \Omega \setminus \Omega_2$.	
\end{corollary}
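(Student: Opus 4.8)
The plan is to prove Corollary~\ref{infinitely presented} by combining the convergence structure established in Theorem~\ref{closure} with the standard quotient argument already sketched in the paragraph preceding the statement. The key logical principle is this: if $G$ is finitely presented with a finite relator set $R$, and a sequence $(G_n)$ in $\mathcal{M}_8$ converges to $G$, then for all sufficiently large $n$ the ball of radius $\lfloor m/2 \rfloor$ (where $m = \max_{r \in R} |r|$) in the Cayley graph of $G_n$ agrees with that of $G$, so every relator of $G$ holds in $G_n$; hence $G$ surjects onto $G_n$. Consequently $G$ cannot be finitely presented if it is the limit of a sequence of groups none of which is a quotient of $G$ — in particular, if the $G_n$ have strictly larger growth, since a quotient can only have smaller-or-equal growth type.

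First I would fix $\omega \in \Omega \setminus \Omega_2$, so that $\ao{G} \in \mathcal{X}_0 \cup \mathcal{X}_1$. By Corollary~\ref{growth-main}, $\ao{G}$ has intermediate growth. Next I would invoke Theorem~\ref{closure}: since $\mathcal{X}_\sharp = \mathcal{Y} \cup \mathcal{X}_1 \cup \bbbb{\mathcal{X}}_2 = (\bbbb{\mathcal{X}}_2)_\sharp$, the point $\ao{G}$ lies in the set of limit points of $\bbbb{\mathcal{X}}_2 = \mathcal{X}_2^\beta$. (For $\omega \in \mathcal{X}_0$ this is immediate from $\mathcal{Y} \subseteq \mathcal{X}_\sharp = (\bbbb{\mathcal{X}}_2)_\sharp$; for $\omega \in \mathcal{X}_1$ it follows from $\mathcal{X}_1 \subseteq \mathcal{X}_\sharp = (\bbbb{\mathcal{X}}_2)_\sharp$ directly.) Therefore there exists a sequence $(G_n)$ of marked groups in $\mathcal{X}_2^\beta$ with $G_n \to \ao{G}$ in $\mathcal{M}_8$, and I may assume the $G_n$ are pairwise distinct from $\ao{G}$.

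Then I would apply Corollary~\ref{growth-main} once more: every group in $\bbbb{\mathcal{X}}_2 = \mathcal{X}_2^\beta$ has exponential growth. Now suppose for contradiction that $\ao{G}$ were finitely presented. By the quotient principle above, $\ao{G}$ would surject onto $G_n$ for all large $n$, forcing the growth rate of $\ao{G}$ to dominate that of $G_n$, i.e.\ to be at least exponential. This contradicts the fact that $\ao{G}$ has intermediate growth. Hence $\ao{G}$ is infinitely presented, which is the assertion.

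The only genuinely delicate point is the ball-matching step, and I would spell it out carefully rather than treat it as routine: convergence $G_n \to \ao{G}$ in $\mathcal{M}_8$ means that for each fixed radius the labelled balls in the Cayley graphs eventually coincide, so I need to choose $n$ large enough that the radius-$\lfloor m/2 \rfloor$ balls agree, guaranteeing that each relator $r \in R$ (of length at most $m$) is trivial in $G_n$ and therefore that the assignment of generators extends to a surjective homomorphism $\ao{G} \twoheadrightarrow G_n$. I would also note explicitly that growth type is monotone under surjections, which is what converts ``$\ao{G}$ maps onto an exponential-growth group'' into ``$\ao{G}$ has exponential growth,'' completing the contradiction. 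No step here presents a real obstacle beyond this bookkeeping, as all the hard structural content has been absorbed into Theorems~\ref{closure} and the growth statement of Corollary~\ref{growth-main}.
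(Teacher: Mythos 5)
Your proposal is correct and follows essentially the same route as the paper: the standard fact that a finitely presented limit of marked groups surjects onto the terms of the sequence for large index (via matching balls of radius exceeding half the maximal relator length), combined with Theorem~\ref{closure} to produce a sequence in $\mathcal{X}_2^\beta$ converging to $\ao{G}$ and Corollary~\ref{growth-main} to contrast the exponential growth of those groups with the intermediate growth of $\ao{G}$. The only difference is presentational — you spell out the ball-matching and growth-monotonicity steps that the paper treats as routine.
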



The Cantor-Bendixson rank is an invariant of topological spaces. It is the least ordinal at which the removal of isolated points makes no change to the space. If the topological space is Polish (complete, metrizable and separable), then the  Cantor-Bendixson rank is countable \cite{Kec95}. As a consequence of theorem \ref{closure}, the Cantor-Bendixson rank of $\overline{\mathcal{X}}$ is 1 .

\section{Preliminaries}

We will be using following notations; $\Omega$, sequences of three symbols $0,1,2$, and $\Omega_0, \Omega_1, \Omega_2$ subsets of $\Omega$, where $\Omega_0$ the set of all sequences with all three symbols occurring infinitely often, $\Omega_1$ the set of all sequences with exactly two symbols occurring infinitely often, and $\Omega_2$ the set of all eventually constant sequences. Also let $\sigma : \Omega \rightarrow \Omega$ be the left shift. i.e. $(\sigma\omega)_n = \omega_{n+1}$.

\subsection{Generalized Grigorchuk's Groups $\mathbf{G_\omega}$ and Generalized Grigorchuk's Overgroups $\mathbf{\ao{G}}$} \label{generalized}

Consider the labeled  binary rooted tree $T_2$ (see figure ~\ref{fig:tree}). For each vertex $v$, let $I$ be the trivial action on $v$ and let $P$ be the action of interchanging the vertices $v0,v1$ and acting trivially on these two vertices. We identify an infinite sequence $\{a_n\}$ of $P,I$ with the element $g \in Aut(T_2)$ such that $g \cdot (1^{n-1}0) = a_n$. We define $a$ to be the element acting on the root as $P$ and trivially on other vertices and $x$ to be the element $(P,P, \hdots)$.

\tikzset{
	s/.style={circle,draw,inner sep=0,fill=black},
	l/.style={circle,draw,inner sep=0,fill=black,xshift=10},
	r/.style={circle,draw,inner sep=0,fill=black,xshift=-10},
}
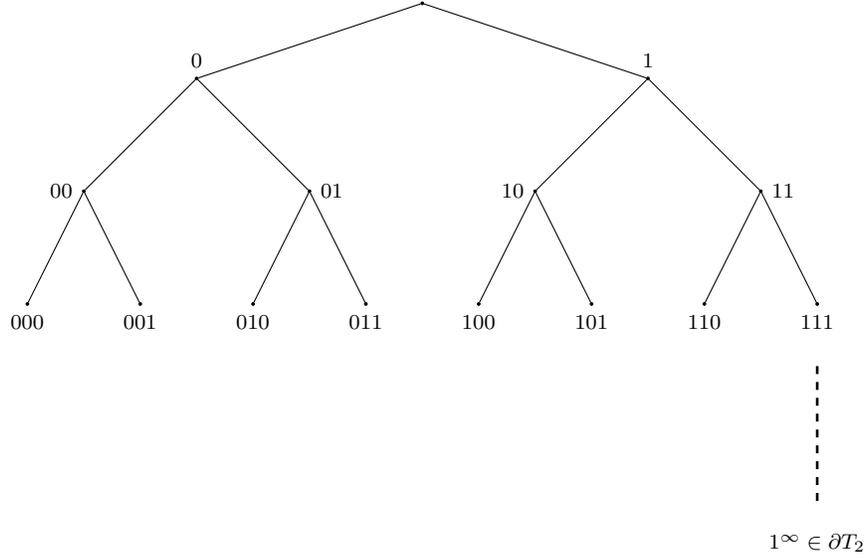
\begin{figure}[t!]
	\centering
	\begin{tikzpicture}[scale=1,font=\footnotesize]
	\tikzstyle{level 1}=[level distance=10mm,sibling distance=60mm]
	\tikzstyle{level 2}=[level distance=15mm,sibling distance=30mm]
	\tikzstyle{level 3}=[level distance=15mm,sibling distance=15mm]
	\tikzstyle{level 4}=[level distance=10mm,sibling distance=10mm]
	\node(-1)[s,label=above:{}]{}
	child{node(0)[s,label=above:{$0$}]{}
		child{node(00)[s,label=left:{${00}$}]{}
			child{node(100)[s,label=below:{${000}$}]{}}
			child{node(101)[s,label=below:{${001}$}]{}}
		}
		child{node(01)[s,label=right:{${01}$}]{}
			child{node(110)[s,label=below:{${010}$}]{}}
			child{node(111)[s,label=below:{${011}$}]{}}
		}
	}
	child{node(1)[s,label=above:{$1$}]{}
		child{node(10)[s,label=left:{${10}$}]{}
			child{node(100)[s,label=below:{${100}$}]{}}
			child{node(101)[s,label=below:{${101}$}]{}}
		}
		child{node(11)[s,label=right:{${11}$}]{}
			child{node(110)[s,label=below:{${110}$}]{}}
			child{node(111)[s,label=below    :{${111}$}]{}}
		}
	};
	\node(1111)[yshift=-20] at ($(111)$) {};
	\node(11111)[yshift=-80,label=below:{$1^\infty \in \partial T_2$}] at ($(111)$) {};
	\draw[dashed,line width=1 pt] (1111) to (11111);
	\end{tikzpicture}
	\caption{Labeled binary rooted tree $T_2$}
	\label{fig:tree}
\end{figure}

For $\omega \in \Omega$, identify elements $b_\omega, c_\omega, d_\omega$ with sequences $\{b_n\}, \{c_n\}, \{d_n\}$, respectively, where
\\
$ b_n = \begin{cases} 
P & \omega_n = 0 \text{ or } 1 \\
I & \omega_n = 2 
\end{cases} \quad , \quad
c_n = \begin{cases} 
P & \omega_n = 0 \text{ or } 2 \\
I & \omega_n = 1
\end{cases} \quad , \quad
d_n = \begin{cases} 
P & \omega_n = 1 \text{ or } 2 \\
I & \omega_n = 0
\end{cases}
$.
\\
Further define $\ao{b} := xb_\omega, \; \ao{c} := xc_\omega$ and $\ao{d} := xd_\omega$. Note that all these elements are involutions and all except $a$ commute with each other. The generalized Grigorchuk's group $G_\omega$, introduced in \cite{Gri84}, is the group generated by elements $a,b_\omega, c_\omega, d_\omega$ and the generalized overgroup $\ao{G}$, is the group generated by $a,b_\omega, c_\omega, d_\omega, x$. $G_\omega \subset \ao{G}$ and it is useful to view $\ao{G}$ as the group generated by elements $a,b_\omega, c_\omega, d_\omega, x, \ao{b}, \ao{c}, \ao{d}$, where a typical element $g \in \ao{G}$ can be represented in reduced form $(a)*a*a \hdots a*a*(a)$ where first and last $a$ can be omitted and $*$s represent generators other than $a$, using simple contractions;

\begin{gather}\label{equation:simplecontractions}
	a^2 = x^2 = b_\omega^2 = c_\omega^2 = d_\omega^2 = \ao{b}^2 = \ao{c}^2 = \ao{d}^2 = 1 \nonumber
	\\
	b_\omega c_\omega = c_\omega b_\omega = d_\omega, \quad
	c_\omega d_\omega = d_\omega c_\omega = b_\omega, \quad
	d_\omega b_\omega = b_\omega d_\omega = c_\omega \nonumber
	\\
	\ao{b} \ao{c} = \ao{c} \ao{b} = d_\omega, \quad
	\ao{c} \ao{d} = \ao{d} \ao{c} = b_\omega, \quad
	\ao{d} \ao{b} = \ao{b} \ao{d} = c_\omega \nonumber
	\\
	b_\omega \ao{c} = \ao{c} b_\omega = \ao{d}, \quad
	c_\omega \ao{d} = \ao{d} c_\omega = \ao{b}, \quad
	d_\omega \ao{b} = \ao{b} d_\omega = \ao{c} 
	\\
	\ao{b} c_\omega = c_\omega \ao{b} = \ao{d}, \quad
	\ao{c} d_\omega = d_\omega \ao{c} = \ao{b}, \quad
	\ao{d} b_\omega = b_\omega \ao{d} = \ao{c} \nonumber 
	\\
	b_\omega \ao{b} = \ao{b} b_\omega = c_\omega \ao{c} = \ao{c} c_\omega = d_\omega \ao{d} = \ao{d} d_\omega = x \nonumber
	\\
	b_\omega x = x b_\omega = \ao{b}, \quad
	c_\omega x = x c_\omega = \ao{c}, \quad
	d_\omega x = x d_\omega = \ao{d} \nonumber
	\\
	\ao{b} x = x \ao{b} = b_\omega, \quad
	\ao{c} x = x \ao{c} = c_\omega, \quad
	\ao{d} x = x \ao{d} = d_\omega \nonumber
\end{gather}

Denote $\ao{H} := \ao{H}^{(1)} := Stab_{\ao{G}}(1)$ and $g \in \ao{H}$ if and only if $g$ has even number of $a$'s. There is a natural embedding $\ao{\psi}$ from $\ao{H}$ into $\aso{G}\times \aso{G}$ given by $\ao{\psi}(g) = (g|_0,g|_1)$, where $g|_v$ is the restricted action on rooted tree with root $v$, for $v=0,1$. We will write $g=(g|_0,g_1)$ and omit subscript $\omega$ if there is no ambiguity.

for any group element (or a word) in $g \in Stab_{\ao{G}}(n)$, $g$ can be represented by $2^n$ elements $(g|_{i_1 i_2 \hdots i_n})_{i_1, i_2, \hdots, i_n \in \{0,1\}}$ by applying natural embedding $n$ times. This is called the decomposition of the group element (or the word) $g$ in to the depth of $n$, and can be visualize by a binary rooted tree with $n$ levels. If in some depth, all its elements $g|_{i_1 i_2 \hdots i_n}$ has length at most 1 (i.e. they are either the identity (empty word) or generators), then we call $(g|_{i_1 i_2 \hdots i_n})_{i_1, i_2, \hdots, i_n \in \{0,1\}}$ the nucleus of the element (word) $g$.

\subsection{Space of Marked Groups} \label{marked}

The space of marked groups with $k$ generators $\mathcal{M}_k $, introduced in \cite{Gri84} is the space consisting of tuples $(G,S)$ where $S$ is an ordered set of $k$ elements generating the group $G$, together with the topology generated by the metric
\[ d((G_1,S_1),(G_2,S_2)) = 2^{-n} \]
where $n$ is the largest integer such that the balls of radius $n$ centered at identity of the Cayley graphs of $(G_1,S_1)$ and $(G_2,S_2)$ are identical.

Let $\{G_n\}$ be a sequence of groups in $\mathcal{M}_k$ and let $G \in \mathcal{M}_k$. We denote $G_n \Rightarrow G$ if $\{ G_n \}$ converges to $G$ under the metric topology of $\mathcal{M}_k$.

\section{Modified Overgroups}

\subsection{Algorithms for the Word Problem} \label{algo}

First we define the \textbf{algorithm} $\bm{\alpha}$ which solves the word problem for $\ao{G}$, when $\omega \in \Omega_0$. Given any reduced word $W$ of the alphabet $\{ a,b,c,d,x,\ttt{b} , \ttt{c}, \ttt{d} \}$, if it has even number of $a$'s, use natural embedding $\psi:\ttt{H} \rightarrow \ttt{G} \times \ttt{G}$ to get two reduced words $W_0,W_1$. If $W$ has odd number of $a$'s, terminate the algorithm. Similarly follow this process $N$ times, where $N = \lceil \log_2{|W|} \rceil$, to obtain $2^N$ reduced words $\{ W_{i_1 i_2 \hdots i_N} \}$, if such words exist. Then $|W_{i_1 i_2 \hdots i_N}| \leq \frac{|W|}{2^N} + 1 - \frac{1}{2^N}$ and so $|W_{i_1 i_2 \hdots i_N}|$ is either 0 or 1, and thus the nucleus is achieved. The algorithm $\alpha$ gives positive result if all words $W_{i_1 i_2 \hdots i_N}$ are empty word; That is the nucleus of $W$ consists of empty words.

Now for $\{i,j,k\} = \{0,1,2\}$ (we will use this notation of $i,j,k$ throughout  rest of the text.) we define \textbf{algorithm} $\bm{\beta_{ij}}$ which solves the word problem for $\ao{G}$, when $\omega \in \Omega_1$ and $i,j$ occur in $\omega$ infinitely often. Let $N_0$ be the largest index such that $\omega_{N_0} = k$. Given any reduced word $W$ of the alphabet $\{ a,b,c,d,x,\ttt{b} , \ttt{c}, \ttt{d} \}$, similarly to above, use natural embedding $\psi:\ttt{H} \rightarrow \ttt{G} \times \ttt{G}$ to get two reduced words $W_0,W_1$, if such words exist. Follow this process $N$ times, where $N = \max \{N_0, \lceil \log_2{|W|} \rceil \}$, to obtain $2^N$ reduced words $\{ W_{i_1 i_2 \hdots i_N} \}$, if such words exist. Then $|W_{i_1 i_2 \hdots i_N}| \leq \frac{|W|}{2^N} + 1 - \frac{1}{2^N}$ and so $|W_{i_1 i_2 \hdots i_N}|$ is either 0 or 1, and thus the nucleus is achieved. The algorithm gives positive result if all words $W_{i_1 i_2 \hdots i_N}$ are either empty word or $e_{ij}$, where $e_{01} = \ttt{b}, e_{12} = \ttt{d}$ and $e_{20} = \ttt{c}$; That is the nucleus of $W$ consists of empty words and $e_{ij}$'s.

\subsection{Modified Overgroups} \label{modified}

Here we will introduce new collection of groups using the algorithms described above, named modified overgroups, similar to modified Grigorchuk groups $G_\omega^\alpha$ introduced in \cite{Gri84}. (The notation used in \cite{Gri84} is $\ttt{G}$, which is already taken to overgroups in this text.)

For $\omega \in \Omega$, define modified overgroup $\bm{\aoa{G}}$ as follows: $\ao{G}^\alpha$ is generated by eight elements $\aaa{a},\aaa{b_\omega},\aaa{c_\omega},\aaa{d_\omega},\aaa{x}, \aoa{b},\aoa{c}, \aoa{d}$ satisfying the simple contractions  (\ref{equation:simplecontractions}), and each reduced word ${W}$ represents the identity element in $\aoa{G}$ if and only if $W$ gives positive result when algorithm $\alpha$ is applied.

For $\omega \in \Omega_1 \cup \Omega_2$ with at most finitely many $k$'s, we define modified overgroups $\bm{\aob{G}}$ as follows: $\aob{G}$ is generated by eight elements $\bbb{a}, \bbb{b_\omega}, \bbb{c_\omega},$ $\bbb{d_\omega}, \bbb{x}, \aob{b}, \aob{c}, \aob{d}$ satisfying the simple contractions  (\ref{equation:simplecontractions}), and each reduced word ${W}$ represents the identity element in $\aob{G}$ if and only if $W$ gives positive result when algorithm $\beta_{ij}$ is applied.

\begin{proposition}\label{compare}
	If $\omega \in \Omega_0$, then $\aoa{G} = \ao{G}$ and if $\omega \in \Omega_1 \cup \Omega_2$, then $\aoa{G}$ surjects onto $\ao{G}$ with non trivial kernel.
	\\
	If $\omega \in \Omega_1$, then $\aob{G} = \ao{G}$ and if $\omega \in \Omega_2$, then $\aob{G}$ surjects onto $\ao{G}$ with non trivial kernel.
\end{proposition}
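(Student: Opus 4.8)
The plan is to see all the relevant groups as quotients of one common group $F$, the group presented by the eight generators $a,b_\omega,c_\omega,d_\omega,x,\ao{b},\ao{c},\ao{d}$ subject only to the simple contractions (\ref{equation:simplecontractions}). Each element of $F$ has a reduced form, and $\ao{G}$ is the quotient of $F$ by the relations ``$W$ acts trivially on $T_2$'', whereas $\aoa{G}$ (resp. $\aob{G}$) is the quotient by the relations ``$\alpha(W)$ is positive'' (resp. ``$\beta_{ij}(W)$ is positive''). Hence the identity-on-generators map gives a well-defined surjection $\aoa{G}\to\ao{G}$ (resp. $\aob{G}\to\ao{G}$) exactly when every word accepted by the algorithm is trivial in $\ao{G}$ (\emph{soundness}), and it is an isomorphism exactly when in addition every word trivial in $\ao{G}$ is accepted (\emph{completeness}). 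So the whole statement reduces to establishing soundness in all four regimes and then deciding completeness; ``surjects onto $\ao G$ with nontrivial kernel'' is precisely ``sound but not complete''.

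For this I would first record which length-$\le 1$ words can represent the identity in a section group $\tilde{G}_{\sigma^n\omega}$. Writing $\tau=\sigma^n\omega$ and computing with $\ao{b}=xb_\omega$, $\ao{c}=xc_\omega$, $\ao{d}=xd_\omega$ entrywise along the spine, a single generator is trivial exactly in these cases: $b_\tau=1\iff\tau=2^\infty$, $c_\tau=1\iff\tau=1^\infty$, $d_\tau=1\iff\tau=0^\infty$, while $\ao{b}=1\iff\tau$ omits $2$, $\ao{c}=1\iff\tau$ omits $1$, $\ao{d}=1\iff\tau$ omits $0$, and $a,x$ are always nontrivial. Combining this with the contraction estimate from section \ref{algo} (after $N\ge\lceil\log_2|W|\rceil$ steps every section has length $\le1$) and faithfulness of the action on $T_2$ yields the pivotal reformulation: $W=1$ in $\ao{G}$ if and only if the decomposition of $W$ never meets an odd number of $a$'s and every entry of its depth-$N$ nucleus is either the empty word or one of the generators that happens to be trivial in the corresponding section group $\tilde{G}_{\sigma^N\omega}$.

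With this in hand the equalities and proper surjections follow by matching each algorithm's accept-set against the list of trivial tail generators. Soundness is immediate: for $\alpha$ the accept-set is $\{\text{empty}\}$, always trivial; for $\beta_{ij}$ the accept-set is $\{\text{empty},e_{ij}\}$, and taking $N\ge N_0$ (past the last occurrence of $k$) forces $\sigma^N\omega$ to omit $k$, so that $e_{ij}\in\{\ao{b},\ao{c},\ao{d}\}$ is genuinely trivial there — this is exactly why $N=\max\{N_0,\lceil\log_2|W|\rceil\}$ is the right cutoff. Thus both maps exist. Completeness is then a count of degenerate generators in the tail. If $\omega\in\Omega_0$, every $\sigma^N\omega$ still contains all three symbols, no generator degenerates, the only trivial length-$\le1$ words are empty, and $\alpha$ is complete, giving $\aoa{G}=\ao{G}$. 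If $\omega\in\Omega_1$ with infinite symbols $i,j$ and finite symbol $k$, then for large $N$ the oracle $\sigma^N\omega$ omits exactly $k$, so $e_{ij}$ is the \emph{unique} degenerate generator and the accept-set of $\beta_{ij}$ matches the trivial words exactly, giving $\aob{G}=\ao{G}$. In the remaining regimes the tail degenerates too much: for $\omega\in\Omega_1\cup\Omega_2$ the oracle $\sigma^N\omega$ omits at least one symbol, producing a trivial generator outside $\alpha$'s accept-set (incompleteness of $\alpha$); for $\omega\in\Omega_2$ the tail is constant $i^\infty$, which omits two symbols and hence yields trivial generators besides $e_{ij}$ (incompleteness of $\beta_{ij}$).

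It remains to upgrade ``incompleteness'' to ``nontrivial kernel'', i.e. to exhibit an actual nonempty reduced word $W$ that is trivial in $\ao{G}$ yet rejected by the algorithm, and this is the step I expect to be the main obstacle. When the tail condition already holds at level $0$ the witness is a single letter — e.g. $\ao{b}$ for $\omega\in\{0,1\}^{\mathbb{N}}$ with both symbols infinite (so $\ao{b}\in\ker(\aoa{G}\to\ao{G})$), or $d_\omega$ for $\omega=0^\infty$. For general $\omega$ I would prove a lifting lemma: a degenerate generator $s_0$ occurring in a deep section $\tilde{G}_{\sigma^m\omega}$ can be realized as the vertex-$1^m$ section of a nonempty element $W\in\ao{G}$ all of whose other depth-$m$ sections are trivial, pushing $s_0$ down one level at a time via the recursive generator formulas (for instance $b_\omega=(1,b_{\sigma\omega})$ when $\omega_0=2$, and the analogous identities for the other generators). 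Such a $W$ is then trivial in $\ao{G}$ (every section is a trivial element) but carries $s_0$ in its nucleus with $s_0\ne\text{empty}$ and $s_0\notin\{\text{empty},e_{ij}\}$, so it is rejected and hence is a nontrivial element of $\ker(\aoa{G}\to\ao{G})$ (resp. $\ker(\aob{G}\to\ao{G})$). The technical heart is verifying that the self-similar/branch structure leaves enough room to seat the degenerate section at a single deep vertex with trivial complement; granting the lifting lemma, all four assertions of the proposition follow.
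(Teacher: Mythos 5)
Your proposal is correct and follows essentially the same route as the paper's proof: reduce triviality of a word to its depth-$N$ nucleus, tabulate which length-one words are trivial in the tail section groups $\tilde{G}_{\sigma^N\omega}$, and compare that list with each algorithm's accept set (only the empty word for $\alpha$; the empty word and $e_{ij}$ for $\beta_{ij}$). The one place you go beyond the paper is the ``nontrivial kernel'' step: the paper simply observes that some generator (e.g.\ $\tilde{b}_{\sigma^N\omega}$) becomes trivial in a deep section group but is rejected by the algorithm, and concludes $\ao{G}\neq\aoa{G}$ without exhibiting a level-$0$ witness word; you correctly flag that for a general $\omega\in\Omega_1\cup\Omega_2$ (which may contain all three symbols in its prefix) one must lift the degenerate generator to a nonempty word whose other sections are trivial. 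Your proposed lifting lemma is exactly what the paper supplies later, in effect, via the substitutions $\xi_0,\xi_1$, the words $V_{i_1\cdots i_n}$, and the words $W(ij)$ of Proposition \ref{QR} (and via the explicit kernel computations $Ker(\phi)\cong\bigoplus_{\mathbb{Z}}\mathbb{Z}_2^3$, etc., in Propositions \ref{alpha-gp-omega2} and \ref{alpha-gp-omega1}), so granting it is legitimate; just note that as written your proof defers that verification rather than completing it.
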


\begin{proof}
	Note that if $\omega \in \Omega_0$, then for any $n$, each element in $\as{G}{n}$ of length 1 will never the identity. Therefore, $W = 1$ in $\aoa{G} \iff W=1$ in $\ao{G}$, and so the modified overgroup $\aoa{G}$ is the same as generalized overgroup. If $\omega \in \Omega_1 \cup \Omega_2$, then for some $N$, $\sigma^N \omega$ contains at most two symbols. Say $\sigma^N \omega$ does not contain 2. Then $\as{b}{n} = 1$ in $\as{G}{n}$, but in modified overgroup $\as{G}{n}^\alpha$ it is not identity. Therefore $\ao{G} \neq \aoa{G}$. But any relation in $\aoa{G}$ is in fact a relation in $\ao{G}$ and therefore, $\aoa{G}$ surjects onto $\ao{G}$.
	
	If $\omega \in \Omega_1$ with finitely many $k$'s, then each element in $\as{G}{n}$ of length 1 will never the identity, unless it is $e_{ij}$. Therefore, $W = 1$ in $\aob{G} \iff W=1$ in $\ao{G}$, and so the modified overgroup $\aob{G}$ is the same as generalized overgroup. If $\omega \in \Omega_2$, then for some $N$, $\sigma^N \omega$ contains only one symbol. Say $\sigma^N \omega$ contain only 0's. Then $\as{c}{n} = 1$ in $\as{G}{n}$, but in modified overgroup $\as{G}{n}^{\beta_{01}}$ it is not identity. Therefore $\ao{G} \neq \aob{G}$. But any relation in $\aob{G}$ is in fact a relation in $\ao{G}$ and therefore, $\aob{G}$ surjects onto $\ao{G}$.
\end{proof}

The following proposition is useful in comparing two groups.

\begin{proposition}\label{oande}
	Let $r \in \mathbb{N}$ and let $\omega, \eta \in \Omega$ such that $\omega_i = \eta_i$ for each $i \leq N$, where $N > \log_2{(2r)}$.
	\begin{enumerate}
		\item If $\omega, \eta$ have all three symbols after the $N$-th position, then the balls of radius $r$ of Cayley graphs of $\ao{G}, \ttt{G}_\eta$ are identical.
		\item If $\omega$ has all three symbols after the $N$-th position, then the balls of radius $r$ of Cayley graphs of $\ao{G}, \ttt{G}^\alpha_\eta$ are identical.
		\item If $\omega, \eta$ have exactly the same two symbols, say $\{i,j\}$, after the $N$-th position, then the balls of radius $r$ of Cayley graphs of $\ao{G}, \ttt{G}_\eta$ are identical.
		\item If $\omega$ has only $i,j$ and $\eta$ has no $k$, after the $N$-th position, then the balls of radius $r$ of Cayley graphs of $\ao{G}, \ttt{G}^{\beta_{ij}}_\eta$ are identical.
	\end{enumerate}
\end{proposition}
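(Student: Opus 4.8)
The plan is to reduce each of the four assertions to a statement about the word problem on short words, and then to compare, entry by entry, the nucleus that the genuine group $\ao{G}$ and the relevant modified group assign to such a word. Recall that on $\mathcal{M}_8$ the balls of radius $r$ of two groups carrying the same $8$-element marking are identical (as labelled graphs) precisely when the two groups satisfy exactly the same relations of length at most $2r$: two words $u,v$ of length $\le r$ label the same vertex if and only if $uv^{-1}=1$, and the edges incident to the ball are encoded by relations of the same length. Hence in every case it suffices to show that for each reduced word $W$ with $|W|\le 2r$ one has $W=1$ in $\ao{G}$ if and only if $W=1$ in the second group. The hypothesis $N>\log_2(2r)$ says exactly that $2^N>2r\ge|W|$, the inequality that lets us decompose $W$ all the way to its nucleus at depth $N$.

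First I would establish a locality principle for the decomposition. The embedding $\ao{\psi}$ applied at level $m$ depends on the oracle only through $\omega_m$, so decomposing $W$ to depth $N$ uses only $\omega_1,\dots,\omega_N$; since $\omega$ and $\eta$ agree there, the two decomposition trees coincide symbol for symbol. In particular the $a$-parities that decide whether a branch terminates early (i.e.\ whether $W$ leaves a stabilizer) are identical, so the two groups always agree on the non-triviality of words that are not in $Stab(N)$. When $W\in Stab(N)$ the decomposition yields the \emph{same} $2^N$-tuple of generator symbols for both oracles, and by the bound $|W_{i_1\cdots i_N}|\le |W|/2^N+1-1/2^N<2$ each entry has length $\le 1$. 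Thus the only way the two groups can disagree on ``$W=1$'' is through the triviality of the individual length-$\le 1$ nucleus entries.

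Next I would record once and for all when a single nucleus generator is trivial. Reading off $b_{\sigma^N\omega},c_{\sigma^N\omega},d_{\sigma^N\omega}$ and $\tilde b_{\sigma^N\omega}=x\,b_{\sigma^N\omega}$ (and its analogues) from the definitions, one finds: $a$ and $x$ are never trivial; $b_{\sigma^N\omega},c_{\sigma^N\omega},d_{\sigma^N\omega}$ vanish exactly when the tail $\omega_{N+1}\omega_{N+2}\cdots$ is constantly $2$, $1$, $0$ respectively; and $\tilde b_{\sigma^N\omega},\tilde c_{\sigma^N\omega},\tilde d_{\sigma^N\omega}$ vanish exactly when that tail omits the symbol $2$, $1$, $0$ respectively. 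In particular the vanishing of a nucleus generator in $\ao{G}$ depends only on the \emph{set} of symbols occurring in the tail. For the modified targets the rule is instead algorithmic: in $\ttt{G}^\alpha_\eta$ no generator is ever trivial (algorithm $\alpha$), while in $\ttt{G}^{\beta_{ij}}_\eta$ a generator is trivial if and only if it equals $e_{ij}$ (with $e_{01}=\tilde b$, $e_{12}=\tilde d$, $e_{20}=\tilde c$).

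Finally I would match the two rules case by case. In (1) and (3) the tails of $\omega$ and $\eta$ after position $N$ have the same symbol set ($\{0,1,2\}$, resp.\ the same pair $\{i,j\}$), so the set-dependent vanishing conditions agree for every nucleus entry and $W=1$ in $\ao{G}\iff W=1$ in $\ttt{G}_\eta$. In (2) the tail of $\omega$ meets all three symbols, so no nucleus generator vanishes in $\ao{G}$, which is exactly the rule for $\ttt{G}^\alpha_\eta$. In (4) the tail of $\omega$ is exactly $\{i,j\}$, so the unique vanishing generator is the one omitting $k$, namely $e_{ij}$, matching $\ttt{G}^{\beta_{ij}}_\eta$. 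The step I expect to be the main obstacle is this last matching, where I must reconcile the fact that $\ao{G}$ is evaluated by decomposing to depth $N$ whereas algorithm $\beta_{ij}$ stops at depth $\max\{N_0,\lceil\log_2|W|\rceil\}$. Both ingredients are available: $N$ exceeds the largest index of $k$ in $\omega$ and in $\eta$ (by the hypotheses ``only $i,j$'', resp.\ ``no $k$'', after $N$), so the decomposition has passed every occurrence of $k$; and the pattern ``empty or $e_{ij}$'' is preserved by each further application of $\ao{\psi}$, which I would verify from the one-step identity $\ao{\psi}(\ao{b})=(1,\aso{b})$ valid when $\omega_1\neq2$ (and its analogues for $\tilde c,\tilde d$). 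Checking this stability — together with the harmless deepening for $\alpha$ — is the only genuinely delicate point; everything else reduces to the symbol-set bookkeeping of the triviality table.
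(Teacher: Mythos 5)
Your proof is correct and follows essentially the same route as the paper: reduce to comparing words of length at most $2r$, decompose to depth $N$ using only the shared prefix $\omega_1\cdots\omega_N$ (so the two decomposition trees coincide), bound the nucleus entries by length $1$, and match their triviality against the tail's symbol set on one side and the algorithms' acceptance rules on the other. Your explicit verification of the triviality table and of the stability of the nucleus under the mismatch between depth $N$ and the algorithms' stopping depth spells out points the paper leaves implicit, but it is the same argument.
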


\begin{proof}
	\quad (1) We will say two words $W,X$ of alphabets of generators of $\ao{G},\ttt{G}_\eta$, are equal if their corresponding letters match. Let $W,X$ be equal words of length at most $2r$. Suppose $W = 1$ in $\ao{G}$. Thus we can decompose $W$ into two words $\{W_0,W_1\}$, four words $\{W_{00},W_{01},W_{10},W_{11}\}$, $\hdots$, $2^N$ words $\{W_{i_1i_2 \hdots i_N}\}$, where all these words represents identity in corresponding groups. Note that $|W_{i_1i_2 \hdots i_N}| \leq \frac{|W|}{2^N} + 1 - \frac{1}{2^N} <2$. This together with the fact that $\omega$ has all three symbols implies that $W_{i_1i_2 \hdots i_N} = 1$ as a word. Also note that all the words $W_{i_1i_2 \hdots i_N}$ are described by first $N$ symbols of $\omega$ and since first $N$ symbols of $\omega$ and $\eta$ are equal, $X =1$ in $\ttt{G}_\eta$. Therefore we prove (1). \quad (2) The same argument as of (1) works, since no word of length 1 is identity in $\ttt{G}^\alpha_\eta$. \quad (3) Since $\omega, \eta$ only have $i,j$ after $N$-th position, only length 1 element which represent identity is $e_{ij}$. And therefore proof in (1) works. \quad (4)  The same argument as of (3) works, since only word of length 1 which is identity in $\ttt{G}^{\beta_{ij}}_\eta$ is $e_{ij}$.
\end{proof}

The modified overgroups behave nicely under limits.

\begin{corollary}\label{limit}
	Let $\{ \omega^{(n)} \}$ be a sequence in $\Omega$ and let $\omega \in \Omega$. Then,
	\[ \omega^{(n)} \rightarrow \omega \implies \left( \ttt{G}_{\omega^{(n)}}^\alpha \Rightarrow \aoa{G} \right). \]
	If there is an $N$ such that no $k$ appear after the $N$-th position of each of $\{ \omega^{(n)} \}$, then
	\[ \omega^{(n)} \rightarrow \omega \implies \left( \ttt{G}_{\omega^{(n)}}^{\beta_{ij}} \Rightarrow \aob{G} \right). \]
\end{corollary}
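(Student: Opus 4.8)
The plan is to deduce Corollary~\ref{limit} directly from Proposition~\ref{oande}, using the definition of the metric on $\mathcal{M}_8$. Recall that $\omega^{(n)} \to \omega$ in $\Omega = \{0,1,2\}^\mathbb{N}$ means that for every fixed index $N$, the first $N$ symbols of $\omega^{(n)}$ agree with those of $\omega$ once $n$ is large enough. To show $\ttt{G}_{\omega^{(n)}}^\alpha \Rightarrow \aoa{G}$, I must show that for every radius $r$ the balls of radius $r$ in the Cayley graphs of $\ttt{G}_{\omega^{(n)}}^\alpha$ and $\aoa{G}$ eventually coincide, since that is exactly what $d\bigl((\ttt{G}_{\omega^{(n)}}^\alpha,\cdot),(\aoa{G},\cdot)\bigr) \le 2^{-r}$ encodes.

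First I would fix $r \in \mathbb{N}$ and choose an integer $N$ with $N > \log_2(2r)$. By convergence $\omega^{(n)} \to \omega$, there is an $n_0$ such that for all $n \ge n_0$ the oracles $\omega^{(n)}$ and $\omega$ agree on the first $N$ positions. The goal is now to invoke the appropriate case of Proposition~\ref{oande} with the roles $\omega \leftarrow \omega^{(n)}$ and $\eta \leftarrow \omega$ (or comparing both to a common modified overgroup), concluding that the radius-$r$ balls of $\ttt{G}_{\omega^{(n)}}^\alpha$ and $\aoa{G}$ are identical for all $n \ge n_0$. This gives $d \le 2^{-r}$ for all large $n$, and since $r$ is arbitrary, convergence in $\mathcal{M}_8$ follows. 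The second assertion, for the $\beta_{ij}$ modified overgroups, is handled identically, except that the uniform hypothesis that no $k$ appears after position $N$ in any $\omega^{(n)}$ is precisely what is needed so that case (4) of Proposition~\ref{oande} applies to each $\ttt{G}_{\omega^{(n)}}^{\beta_{ij}}$.

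The main subtlety — and the step I would be most careful about — is that Proposition~\ref{oande} as stated compares a \emph{generalized} overgroup $\ao{G}$ against a \emph{modified} one, so I must match the cases correctly to obtain a comparison between two modified overgroups. The cleanest route is to observe that the proof of Proposition~\ref{oande} really shows that whether a word $W$ of length at most $2r$ is trivial is determined entirely by the first $N$ symbols of the oracle together with the decomposition-and-nucleus criterion of the relevant algorithm; since the algorithm $\alpha$ (respectively $\beta_{ij}$) reads only those first $N$ symbols when processing words of length $\le 2r$ (because $N \ge \lceil \log_2|W|\rceil$, giving nucleus elements of length $\le 1$), the triviality relation on the ball of radius $r$ depends only on $(\omega_1,\dots,\omega_N)$. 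Agreement of $\omega^{(n)}$ and $\omega$ on these symbols therefore forces the balls to coincide. For the $\beta_{ij}$ case I must additionally confirm that $N$ can be taken at least as large as the fixed bound $N_0$ from the definition of algorithm $\beta_{ij}$ (the largest index carrying the symbol $k$); the uniform no-$k$-after-$N$ hypothesis guarantees this is compatible, and enlarging $N$ only strengthens the agreement requirement. Once this bookkeeping is settled, the argument is a routine application of the metric definition and the proposition.
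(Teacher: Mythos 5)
Your proposal is correct and follows the same route as the paper: fix $r$, use convergence of $\omega^{(n)}$ to get agreement with $\omega$ on the first $N > \log_2(2r)$ symbols, and invoke Proposition \ref{oande} to identify the radius-$r$ balls, so that $d \le 2^{-r}$ eventually. Your extra observation --- that Proposition \ref{oande} as stated always compares against the \emph{unmodified} $\ao{G}$, so to compare two $\alpha$-modified (or two $\beta_{ij}$-modified) overgroups one must instead use the fact, implicit in its proof, that the algorithm $\alpha$ (resp. $\beta_{ij}$, once $N \ge N_0$) decides triviality of words of length $\le 2r$ using only the first $N$ symbols of the oracle --- is a point the paper's one-line proof glosses over, and your handling of it (including the role of the uniform no-$k$ hypothesis in bounding $N_0$ for the $\beta_{ij}$ case) is the correct repair.
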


\begin{proof}
	Since $\omega^{(n)} \rightarrow \omega$, we can pick an $\omega^{(n)}$ satisfying hypothesis in proposition \ref{oande}, and using proposition \ref{oande} we get balls of radius $k$ of Cayley graphs of $\ttt{G}_{\omega^{(n)}}^\alpha$ and $\aoa{G}$, are identical. Therefore, $\ttt{G}_{\omega^{(n)}}^\alpha \Rightarrow \aoa{G}$.
	
	Using a similar argument we get $\ttt{G}_{\omega^{(n)}}^{\beta_{ij}} \Rightarrow \aob{G}$, under the hypothesis given.	
\end{proof}

\subsection{Modified overgroups for some ${\omega \in \Omega}$}

Now we will look at the modified overgroups and see what their structures are. In fact we will prove theorem \ref{modified-main} using propositions introduced in this section. First we will introduce some words and substitution rules which will be used throughout this section.

Let $y \neq 1$ be a generator (a group element with length 1) of modified overgroup such that for each $n \in \mathbb{N}$ the decomposition of $y$ into depth $n$ has nucleus $(1,1,\hdots, 1,y)$; That is $y$ at $1^n$-th position and $1$ (or the empty word) every other position. For each $n \in \mathbb{Z}$, define $v_n(y) = v_n$ by
\begin{equation}\label{equation:vn}
	v_n =
	\begin{cases}
		y^{(ax)^n} & ; n \geq 0 \\
		y^{(ax)^{-n-1}a} & ; n <0
	\end{cases}.
\end{equation}
Then we can observe the following properties:

\begin{proposition}\label{vn_proporties}
	\begin{enumerate}
		\item $v_n \neq 1, v_n^2 = 1$ and $v_n^a = v_{-n-1}$ for each $n \in \mathbb{Z}$.
		\item $v_n = (1,v_{n/2})$ when $n$ is even and $v_n = (v_{-(n+1)/2},1)$ when $n$ is odd.
		\item $v_n$'s are mutually distinct and mutually commutative for all $n$.
		\item $ax$ acts on $v_n$ by conjugation and $v_n^{(ax)} = v_{n+1}$.
	\end{enumerate}
\end{proposition}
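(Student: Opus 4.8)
The plan is to set $t := ax$ and first distill from the simple contractions~(\ref{equation:simplecontractions}) and the natural embedding the few identities that everything rests on. Since $a$ is the root swap, conjugation by $a$ interchanges the two coordinates of any element of the first-level stabilizer; from $x = (P,P,\dots)$ one reads off $x = (a,x)$, and the depth-$1$ case of the nucleus hypothesis gives $y = (1,y)$. Two further facts will be used repeatedly: $a t a = t^{-1}$ (equivalently $at = t^{-1}a = x$), so that $a$ inverts $t$ by conjugation, and $y^x = y$, because every generator other than $a$ commutes with $x$. Finally a one-line induction from $t^2 = (t^{-1},t)$ yields the coordinate form $t^{2m} = (t^{-m},t^m)$.

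Granting these, part~(1) is short. Each $v_n$ is a conjugate of the nontrivial involution $y$, so $v_n \neq 1$ and $v_n^2 = 1$. For $v_n^a = v_{-n-1}$ with $n \ge 0$ I would write $v_n = t^{-n} y t^{n}$ and observe that $v_n^a = a\,t^{-n}y\,t^{n}\,a$ is verbatim the defining expression for $v_{-n-1} = y^{(ax)^n a}$; the case $n<0$ then follows by conjugating once more by $a$ and using $a^2=1$. Part~(4), the shift rule $v_n^{t}=v_{n+1}$, is immediate for $n \ge 0$ from $v_n = t^{-n} y t^{n}$. For $n<0$ I would feed $at = x = t^{-1}a$ into the negative-index formula and match powers of $t$. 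The one place this argument is delicate — and the step I expect to be the main obstacle — is the junction $n=-1\mapsto 0$, where the two branches of the definition of $v_n$ meet: a direct computation gives $v_{-1}^{t} = y^{x}$, and it is precisely the relation $y^x=y$ that makes this equal to $v_0 = y$.

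For the decomposition~(2) I would substitute the coordinate formula for $t^{n}$ into $v_n = t^{-n}yt^{n}$ in the even case $n=2m\ge 0$, where it telescopes: $v_{2m} = (t^{m},t^{-m})(1,y)(t^{-m},t^{m}) = (1,\,t^{-m}yt^{m}) = (1,v_m)$, i.e. $v_n = (1,v_{n/2})$. The odd case $n = 2m+1 \ge 0$ I would obtain from part~(4) as $v_{2m+1}=v_{2m}^{t}$; expanding $(1,v_m)^{t}$ and recognising the surviving entry as $v_m^a = v_{-m-1}$ via part~(1) gives $v_n = (v_{-(n+1)/2},1)$. The negative-index cases need no new computation: applying part~(1), conjugation by $a$ swaps the two coordinates of the positive decompositions just computed, and a check of parities shows the result is exactly the two claimed formulas.

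Part~(3) I would derive by iterating~(2). At each level the decomposition of $v_n$ has a single nontrivial coordinate, equal to $v_{\phi(n)}$, sitting in the right coordinate when $n$ is even and the left when $n$ is odd, where $\phi(n)=n/2$ or $\phi(n)=-(n+1)/2$. Since $|\phi(n)|<|n|$ as soon as $|n|\ge 2$ and $\phi(0)=0$, iterating to any depth $N$ presents $v_n$ as the element $y$ (for $N$ large) placed at a single leaf and the identity elsewhere. As $n$ is recovered from its parity together with $\phi(n)$ — via $n=2\phi(n)$ or $n=-2\phi(n)-1$ — the address of that leaf determines $n$, so for $m\neq n$ and $N$ large the nontrivial entries of $v_m$ and $v_n$ sit on disjoint subtrees. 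This gives mutual commutativity immediately, and, since the common nontrivial entry is $y\neq 1$, mutual distinctness as well.
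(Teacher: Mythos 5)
Your proof is correct and takes essentially the same route as the paper's, which only says ``direct calculation'' and ``induction using (2)'': you carry those computations out via $y=(1,y)$, $x=(a,x)$ and the decomposition of powers of $t=ax$, and your support-tracking argument for (3) is the natural fleshing-out of the paper's sketch. The only (harmless) deviation is that you obtain $v_n\neq 1$ directly as a conjugate of $y\neq 1$ rather than by induction from (2).
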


\begin{proof}
	$v_n^2 =1$ since $y$ is an involution. Note that $y = (1,y)$ by natural embedding. Then direct calculation yields (2) and $v_n^a = v_{-n-1}$. Using (2) and induction on $n$, we can show that $v_n \neq 1$ and $v_n$'s are mutually commutative. Now it is straight forward to show $v_n$'s are mutually distinct by (2) and (1). (4) is also by direct calculation.
\end{proof}

Now we will introduce two substitution rules $\xi_0, \xi_1$:
\begin{equation}\label{equation:substitution rule}
	\xi_0 =
	\begin{cases}
		a \mapsto x\\
		x \mapsto axa\\
		y \mapsto aya
	\end{cases}
	\hspace{1cm}
	\xi_1 =
	\begin{cases}
		a \mapsto axa\\
		x \mapsto x\\
		y \mapsto y
	\end{cases}
\end{equation}
Note that $\xi_1((ax)^n) = (ax)^{2n}, \xi_1((ax)^na) = (ax)^{2n+1}a, \xi_0((ax)^n) = a(ax)^{2n}a$ and $\xi_0((ax)^na) = a(ax)^{2n+1}$. Then $\xi_1(v_n) = v_{2n} = (1,v_n)$ and $\xi_0(v_n) = v_{-2n-1} = (v_n,1)$. Now we will recursively construct words $V(y)_{i_1i_2 \hdots i_n} = V_{i_1i_2 \hdots i_n}$'s, corresponding to vertices $i_1i_2 \hdots i_n$ of $T_2$, as follows (see figure \ref{fig:tree V values}):
\[V_\emptyset = v_0\]
\begin{equation}\label{equation:V words}
	V_{i_1i_2 \hdots i_n} = \xi_{i_1}(V_{i_2 \hdots i_n})
\end{equation}

\tikzset{
	s/.style={circle,draw,inner sep=0,fill=black},
	l/.style={circle,draw,inner sep=0,fill=black,xshift=10},
	r/.style={circle,draw,inner sep=0,fill=black,xshift=-10},
}
\begin{figure}[t!]
	\centering
	\begin{tikzpicture}[scale=1,font=\footnotesize]
	\tikzstyle{level 1}=[level distance=10mm,sibling distance=60mm]
	\tikzstyle{level 2}=[level distance=15mm,sibling distance=30mm]
	\tikzstyle{level 3}=[level distance=15mm,sibling distance=15mm]
	\tikzstyle{level 4}=[level distance=10mm,sibling distance=10mm]
	\node(-1)[s,label=above:{$v_0$}]{}
	child{node(0)[s,label=above:{$v_{-1}$}]{}
		child{node(00)[s,label=left:{${v_1}$}]{}
			child{node(100)[s,label=below:{${v_{-3}}$}]{}}
			child{node(101)[s,label=below:{${v_1}$}]{}}
		}
		child{node(01)[s,label=right:{${v_{-1}}$}]{}
			child{node(110)[s,label=below:{${v_3}$}]{}}
			child{node(111)[s,label=below:{${v_{-1}}$}]{}}
		}
	}
	child{node(1)[s,label=above:{$v_0$}]{}
		child{node(10)[s,label=left:{${v_{-2}}$}]{}
			child{node(100)[s,label=below:{${v_2}$}]{}}
			child{node(101)[s,label=below:{${v_{-2}}$}]{}}
		}
		child{node(11)[s,label=right:{${v_0}$}]{}
			child{node(110)[s,label=below:{${v_{-4}}$}]{}}
			child{node(111)[s,label=below    :{${v_0}$}]{}}
		}
	};
	\end{tikzpicture}
	\caption{$V_{i_1i_2 \hdots i_n}$ values of first 3 levels}
	\label{fig:tree V values}
\end{figure}
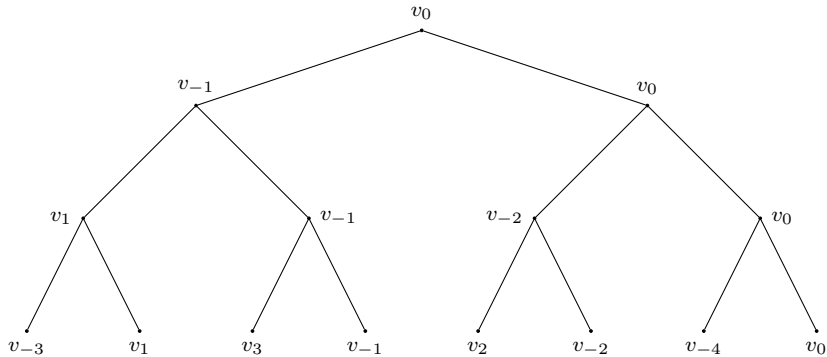

It is easy to see that $V_{i_1i_2 \hdots i_n} = v_k$ for some $k \in \mathbb{Z}$ and has a nucleus of depth $n$ with $y$ in $i_1i_2 \hdots i_n$-th coordinate and empty word in other coordinates. Now we will introduce some propositions, which describes the group structure of modified groups for $\omega = 0^\infty$ and $\omega \in \{0,1\}^\mathbb{N}$.

\begin{proposition}\label{alpha-gp-omega2}
	$\at{G}{0^\infty}^\alpha$ is virtually $\mathcal{L}_3$ of index 2.
\end{proposition}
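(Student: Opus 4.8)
The plan is to realize $\at{G}{0^\infty}^\alpha$ as a split extension $\mathcal{L}_3\rtimes\langle a\rangle$, building the copy of $\mathcal{L}_3=\mathbb{Z}_2^3\wr\mathbb{Z}$ out of the lamp families $v_n(y)$ supplied by Proposition \ref{vn_proporties}. First I would specialize everything to $\omega=0^\infty$. Since $\sigma 0^\infty=0^\infty$ the embedding $\psi$ maps $\at{H}{0^\infty}$ into $\at{G}{0^\infty}^\alpha\times\at{G}{0^\infty}^\alpha$ with a constant recursion, and every generator other than $a$ lies in $\at{H}{0^\infty}$. Via the simple contractions \eqref{equation:simplecontractions} the eight length-$1$ elements $\{1,b_\omega,c_\omega,d_\omega,x,\ao b,\ao c,\ao d\}$ form an elementary abelian germ group isomorphic to $\mathbb{Z}_2^3$. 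Computing the depth-one decompositions, I would isolate three generators $y_1,y_2,y_3$ (a basis of this germ group, e.g.\ a suitable choice among $\ao b,\ao c,\ao d$) whose decomposition has the form $(1,y_i)$; each then meets the hypothesis of Proposition \ref{vn_proporties}, its depth-$n$ nucleus being $(1,\dots,1,y_i)$, and so produces a family $\{v_n(y_i)\}_{n\in\mathbb{Z}}$.

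Next I would assemble the base group. By Proposition \ref{vn_proporties} each family consists of distinct, mutually commuting involutions, and any two elements $v_n(y_i),v_m(y_j)$ commute, because at a deep enough level their nuclei either sit at distinct leaves or at a common leaf, where the germ group is abelian. The decisive step is to show the three families are jointly independent: a finite product $\prod v_n(y_i)^{\epsilon_{n,i}}$ is trivial only if all $\epsilon_{n,i}=0$. I would prove this by passing to depth $N$ large enough that the supports separate, so that each $v_n(y_i)$ contributes $y_i$ at the single leaf indexed by $n$ and the empty word elsewhere; since algorithm $\alpha$ certifies triviality exactly when every nucleus entry is the empty word, independence reduces to the linear independence of $y_1,y_2,y_3$ in the germ $\mathbb{Z}_2^3$. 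This yields $A:=\langle v_n(y_i):i,n\rangle\cong\bigoplus_{\mathbb{Z}}\mathbb{Z}_2^3$.

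Finally, Proposition \ref{vn_proporties}(4) identifies conjugation by $t:=ax$ with the shift $v_n(y_i)\mapsto v_{n+1}(y_i)$; as it permutes infinitely many independent involutions, $t$ has infinite order, whence $\langle A,t\rangle=A\rtimes\langle t\rangle\cong\mathbb{Z}_2^3\wr\mathbb{Z}=\mathcal{L}_3$. To obtain the index, I would express every generator through $A$, $t$ and $a$ (for instance $x=at$, then $b_\omega=x\ao b$, $d_\omega=b_\omega c_\omega$, and so on), giving $\at{G}{0^\infty}^\alpha=\langle\mathcal{L}_3,a\rangle$. Proposition \ref{vn_proporties}(1) gives $v_n^{\,a}=v_{-n-1}\in A$ and a direct computation gives $t^{a}=(ax)^a=xa=t^{-1}$, so $a$ normalizes $\mathcal{L}_3$; meanwhile the total length-parity homomorphism $\Phi\colon\at{G}{0^\infty}^\alpha\to\mathbb{Z}_2$ (with $\Phi(a)=1$ but $\Phi(t)=\Phi(v_n(y_i))=0$) shows $a\notin\mathcal{L}_3$. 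Hence $\at{G}{0^\infty}^\alpha=\mathcal{L}_3\rtimes\langle a\rangle$ and $[\,\at{G}{0^\infty}^\alpha:\mathcal{L}_3\,]=2$.

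The main obstacle I anticipate is the independence analysis that pins the base group down as $\mathbb{Z}_2^3$ per position rather than a smaller $2$-group: verifying both that the chosen generators genuinely satisfy the nucleus hypothesis and that their lamp families stay jointly independent is exactly where the faithfulness of the iterated-$\psi$ / algorithm-$\alpha$ description is essential, and it is what separates $\mathcal{L}_3$ from the lamplighters occurring for the plain Grigorchuk groups. A secondary point to nail down is that $t=ax$ has infinite order and that adjoining $a$ neither collapses $\mathcal{L}_3$ nor fixes it, so that the extension is honestly of index $2$.
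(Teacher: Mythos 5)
Your overall strategy coincides with the paper's: the paper takes the surjection $\phi\colon \at{G}{0^\infty}^\alpha\to\at{G}{0^\infty}\cong D_\infty$, identifies $Ker(\phi)$ with the group generated by the lamp families $v(d)_n, v(\ttt{b})_n, v(\ttt{c})_n$ of Proposition \ref{vn_proporties}, adjoins the shift $\left<ax\right>$ to form the lamplighter-type subgroup, and reads off the index $2$ from $D_\infty/\left<ax\right>$. Your assembly of the semidirect product, the infinite order of $t=ax$, and the parity homomorphism separating $a$ from the base are all fine.

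The gap is exactly at what you call the decisive step. You need three elements $y_1,y_2,y_3$ that form a basis of the germ group $\{1,b,c,d,x,\ttt{b},\ttt{c},\ttt{d}\}\cong\mathbb{Z}_2^3$ and that each decompose as $(1,y_i)$ so that Proposition \ref{vn_proporties} applies. No such triple exists. For $\omega=0^\infty$ the depth-one decompositions are $b=(a,b)$, $c=(a,c)$, $x=(a,x)$, $\ttt{d}=(a,\ttt{d})$ versus $d=(1,d)$, $\ttt{b}=(1,\ttt{b})$, $\ttt{c}=(1,\ttt{c})$; projection to the first coordinate is a homomorphism of the germ group onto $\{1,a\}$, so the elements of the form $(1,\cdot)$ constitute precisely the index-two subgroup $\{1,d,\ttt{b},\ttt{c}\}$, a Klein four-group. (Equivalently, $d\ttt{b}=\ttt{c}$ is one of the simple contractions (\ref{equation:simplecontractions}); note also that $\ttt{d}$, which you list among the candidates, fails the nucleus hypothesis outright.) Consequently $v(d)_n\,v(\ttt{b})_n=v(\ttt{c})_n$ for every $n$, the three families are not jointly independent, and the base group your construction actually produces is $\bigoplus_{\mathbb{Z}}\mathbb{Z}_2^2$, yielding $\mathbb{Z}_2^2\wr\mathbb{Z}$ rather than $\mathcal{L}_3=\mathbb{Z}_2^3\wr\mathbb{Z}$. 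Your planned reduction of independence to ``linear independence of $y_1,y_2,y_3$ in the germ $\mathbb{Z}_2^3$'' is therefore vacuous for the only admissible generators. Be aware that the paper's own Lemma \ref{word} works with the same three families and verifies only that the $v(d)_n,v(\ttt{b})_n,v(\ttt{c})_n$ are \emph{distinct}, which does not give independence; since every element of $Ker(\phi)$ has nucleus valued in $\{1,d,\ttt{b},\ttt{c}\}$, there are no further kernel elements to enlarge the per-position group, so this route cannot be repaired to give $\mathbb{Z}_2^3$ per lamp position --- the honest output of the argument is that $\at{G}{0^\infty}^\alpha$ is virtually $\mathbb{Z}_2^2\wr\mathbb{Z}$ of index $2$.
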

\begin{proof}
	Let $\ttt{G} := \at{G}{0^\infty}^\alpha = \left< a,b,c,d,x,\ttt{b},\ttt{c},\ttt{d} \right>$ and let $G := \at{G}{0^\infty}$. We will drop the subscript $0^\infty$ and superscript $\alpha$ for the convenience. Also we will use the same letters for generating sets of $\ttt{G}$ and $G$ since there will be no ambiguity. Note that in $G$ we have $b = c = \ttt{d} = x$ and $d = \ttt{b} = \ttt{c} =1$. Therefore $G$ is isomorphic to the infinite dihedral group $D_\infty$ generated by $a$ and $b$. Let $\phi$ be the surjection from $\ttt{G}$ to $G$ described in proposition \ref{compare}.
	
	\begin{lemma}\label{word}
		$Ker(\phi) = \left< \left< d,\ttt{b},\ttt{c}\right> \right> = \left< v(d)_n, v(\ttt{b})_n, v(\ttt{c})_n | n \in \mathbb{Z} \right> \cong \bigoplus_{\mathbb{Z}}{\mathbb{Z}_2^3}$. Here $\left<\left<\cdot\right>\right>$ represents the normal closure.
		\begin{proof}
			The inclusion $\left< v(d)_n, v(\ttt{b})_n, v(\ttt{c})_n | n \in \mathbb{Z} \right> \leq \left< \left< d,\ttt{b},\ttt{c}\right> \right> \leq Ker(\phi)$ is trivial. To show the other inclusion, let $g \in Ker(\phi)$ and let $W$ be a reduced word representing $g$ in $\ttt{G}$. Since $g \in Ker(\phi)$, $W=1$ in $G$. But a word is the identity in $G$ if and only if its nucleus contains only $1, d, \ttt{b}, \ttt{c}$. So $W$ has nucleus with only $1, d, \ttt{b}, \ttt{c}$. We can construct a word $W'$ using $V(d)_{i_1i_2 \hdots i_n}, V(\ttt{b})_{i_1i_2 \hdots i_n}$ and $V(\ttt{c})_{i_1i_2 \hdots i_n}$ so that the nucleus of $W'$ is the same as the nucleus of $W$. Thus $g = W = W'$ and so $Ker(\phi) \leq \left< v(d)_n, v(\ttt{b})_n, v(\ttt{c})_n | n \in \mathbb{Z} \right>$. Therefore we get the equality of three groups. Using a similar argument as of the proof of proposition \ref{vn_proporties} (3), we see that $v(d)_n, v(\ttt{b})_n, v(\ttt{c})_n$ are distinct and therefore by proposition \ref{vn_proporties}, we get $\left< v(d)_n, v(\ttt{b})_n, v(\ttt{c})_n | n \in \mathbb{Z} \right> \cong \bigoplus_{\mathbb{Z}}{\mathbb{Z}_2^3}$. This completes the proof of lemma.
		\end{proof}
	\end{lemma}
	
	Note that the generator of $\left<ax\right>$ acts on $Ker(\phi)$ by shifting its generators. Also note that $Ker(\phi)$ and $\left<ax\right>$ intersects trivially, since $ax$ is of infinite order and all elements of $Ker(\phi)$ are involutions. So, $Ker(\phi) \rtimes \left<ax\right>$ is isomorphic to $\mathcal{L}_3 = \mathbb{Z}_2^3 \wr \mathbb{Z}$.
	
	Conjugating the generators of $Ker(\phi) \rtimes \left<ax\right>$ by generators of $\ttt{G}$, we see that $Ker(\phi) \rtimes \left<ab\right>$ is normal in $\ttt{G}$. The quotient $\ttt{G}/Ker(\phi) \cong D_\infty$ maps onto the quotient $\ttt{G}/\left(Ker(\phi) \rtimes \left<ax\right>\right)$. The kernel of the homomorphism from  $\ttt{G}/Ker(\phi)$ to $\ttt{G}/\left(Ker(\phi) \rtimes \left<ax\right>\right)$ is generated by the image of $ax$ in $\ttt{G}/Ker(\phi)$. So $Ker(\phi) \rtimes \left<ax\right>$ has index 2 in $\ttt{G}$, and therefore $\ttt{G}$ is almost $Ker(\phi) \rtimes \left<ax\right> \cong \mathcal{L}_3$ with index 2.
\end{proof}

\begin{proposition}\label{beta-gp-omega2}
	$\at{G}{0^\infty}^{\beta_{ij}}$ is virtually $\mathcal{L}$ with index 2. 
\end{proposition}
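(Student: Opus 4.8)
The plan is to follow the blueprint of the proof of Proposition \ref{alpha-gp-omega2} almost verbatim, replacing the rank-$3$ lamp group $\mathbb{Z}_2^3$ by a rank-$1$ lamp group $\mathbb{Z}_2$. Without loss of generality I treat $\beta_{01}$, so that $e_{01} = \ttt{b}$; the other admissible choice for $0^\infty$, namely $\beta_{20}$ (where $e_{20} = \ttt{c}$), is identical after relabelling. Write $\ttt{G} := \at{G}{0^\infty}^{\beta_{01}}$ and $G := \at{G}{0^\infty}$, drop the subscript $0^\infty$, and recall from the proof of Proposition \ref{alpha-gp-omega2} that $G \cong D_\infty = \langle a, x\rangle$, with $b = c = \ttt{d} = x$ and $d = \ttt{b} = \ttt{c} = 1$ in $G$. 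Let $\phi \colon \ttt{G} \to G$ be the surjection of Proposition \ref{compare}.

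The one genuinely new step is to identify which of the three generators $d, \ttt{b}, \ttt{c}$ that vanish in $G$ survive in $\ttt{G}$. First I would observe that $\ttt{b}$ is already trivial in $\ttt{G}$: it is one of the generators $y$ with nucleus $(1,1,\dots,1,y)$ to which the setup of Proposition \ref{vn_proporties} applies, so the decomposition of $\ttt{b}$ into any depth $n$ consists only of $1$'s and the single letter $\ttt{b} = e_{01}$, whence algorithm $\beta_{01}$ returns a positive result and $\ttt{b} = 1$. Feeding $\ttt{b} = 1$ into the simple contractions (\ref{equation:simplecontractions}) then collapses the generating set: from $\ttt{b}\ttt{c} = d_\omega$ I get $\ttt{c} = d$, from $\ttt{d}\ttt{b} = c_\omega$ I get $\ttt{d} = c$, and from $b_\omega \ttt{b} = x$ I get $b = x$. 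Thus, among $d, \ttt{b}, \ttt{c}$, only $d$ survives as an independent nontrivial element, whereas in the $\alpha$-case all three were independent; this is precisely why the lamp group drops from $\mathbb{Z}_2^3$ to $\mathbb{Z}_2$.

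Next I would prove the analogue of Lemma \ref{word}, namely $Ker(\phi) = \langle\langle d\rangle\rangle = \langle v(d)_n \mid n \in \mathbb{Z}\rangle \cong \bigoplus_{\mathbb{Z}} \mathbb{Z}_2$. The inclusion $\supseteq$ is clear. For $\subseteq$, take $g \in Ker(\phi)$ represented by a reduced word $W$; since $W = 1$ in $G$, the characterisation of triviality in $G$ from Lemma \ref{word} shows that the nucleus of $W$ lies in $\{1, d, \ttt{b}, \ttt{c}\}$, which in $\ttt{G}$ reduces to $\{1, d\}$ because $\ttt{b} = 1$ and $\ttt{c} = d$. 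Rebuilding a word $W'$ with the same nucleus out of the $V(d)_{i_1\dots i_n}$ (the $V(\ttt{b})$-words being trivial and the $V(\ttt{c})$-words coinciding with the $V(d)$-words), and then invoking Proposition \ref{vn_proporties} for the distinctness and mutual commutativity of the $v(d)_n$, gives $g \in \langle v(d)_n\rangle \cong \bigoplus_{\mathbb{Z}}\mathbb{Z}_2$.

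Finally, exactly as in Proposition \ref{alpha-gp-omega2}, Proposition \ref{vn_proporties}(4) shows that $\langle ax\rangle$ acts on $Ker(\phi)$ by the shift $v(d)_n \mapsto v(d)_{n+1}$, and $Ker(\phi) \cap \langle ax\rangle = 1$ since $ax$ has infinite order while $Ker(\phi)$ is $2$-torsion; hence $Ker(\phi) \rtimes \langle ax\rangle \cong \mathbb{Z}_2 \wr \mathbb{Z} = \mathcal{L}$. This subgroup is normal in $\ttt{G}$, and the quotient $\ttt{G}/Ker(\phi) \cong D_\infty$ maps onto $\ttt{G}/(Ker(\phi)\rtimes\langle ax\rangle)$ with kernel generated by the image of $ax$, i.e.\ the index-$2$ translation subgroup of $D_\infty$; therefore $Ker(\phi) \rtimes \langle ax\rangle \cong \mathcal{L}$ has index $2$ in $\ttt{G}$. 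The main obstacle is the collapse computation of the second paragraph, where one must verify that the single relation $\ttt{b}=1$ forces $\ttt{c}=d$ and thereby fuses the three would-be lamp colours into one; everything else is a transcription of the $\alpha$-argument.
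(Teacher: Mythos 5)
Your proof is correct, but it takes a genuinely different route from the paper's. The paper does not redo the lamplighter computation at all: it exhibits an explicit isomorphism $f \colon \at{G}{0^\infty}^{\beta_{01}} \to {G}_{0^\infty}^{\alpha}$ onto the \emph{four-generated} modified Grigorchuk group, sending $a,b,c,d$ to themselves, $x \mapsto b$, $\ttt{b} \mapsto 1$, $\ttt{c} \mapsto d$, $\ttt{d} \mapsto c$ --- which is exactly the collapse you derive from $\ttt{b}=1$ and the simple contractions --- and then quotes \cite{BG14} for the fact that ${G}_{0^\infty}^{\alpha}$ is virtually $\mathcal{L}$ of index $2$. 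You instead transcribe the internal argument of Proposition \ref{alpha-gp-omega2}: show the algorithm $\beta_{01}$ kills $\ttt{b}$, identify $Ker(\phi)$ as $\left< v(d)_n \mid n \in \mathbb{Z} \right> \cong \bigoplus_{\mathbb{Z}}\mathbb{Z}_2$, adjoin $\left<ax\right>$ acting by the shift, and read off the index from $\ttt{G}/Ker(\phi)\cong D_\infty$. The paper's route buys brevity and a sharper conclusion (an isomorphism with a previously studied group, not just the virtual structure); your route buys self-containedness --- no appeal to \cite{BG14} --- and makes transparent why the lamp group drops from $\mathbb{Z}_2^3$ to $\mathbb{Z}_2$, namely that of the three letters $d,\ttt{b},\ttt{c}$ that die in $\at{G}{0^\infty}$ only $d$ survives, since $\beta_{01}$ trivializes $\ttt{b}$ and the contraction $\ttt{b}\ttt{c}=d$ then fuses $\ttt{c}$ with $d$. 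Both treatments handle only $\beta_{01}$ and dispose of $\beta_{20}$ by symmetry. The one point you should state explicitly is that $d\neq 1$ in $\at{G}{0^\infty}^{\beta_{01}}$ (its nucleus is $(1,\dots,1,d)$ and $d\neq e_{01}$, so the algorithm returns a negative result), which guarantees the lamp group is genuinely $\mathbb{Z}_2$ rather than trivial; otherwise the argument is sound.
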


\begin{proof}
	To simplicity, consider $\at{G}{0^\infty}^{\beta_{01}}$. we will show $\at{G}{0^\infty}^{\beta_{01}} \cong {G}_{0^\infty}^{\alpha}$. Let $f:\at{G}{0^\infty}^{\beta_{01}} \rightarrow {G}_{0^\infty}^{\alpha}$ be defined by,
	\[f: \begin{cases} 
	a^{\beta_{01}} & \mapsto \quad a^{\alpha} \\
	b_{0^\infty}^{\beta_{01}} & \mapsto \quad b_{0^\infty}^{\alpha} \\
	c_{0^\infty}^{\beta_{01}} & \mapsto \quad c_{0^\infty}^{\alpha} \\
	d_{0^\infty}^{\beta_{01}} & \mapsto \quad d_{0^\infty}^{\alpha} \\
	x^{\beta_{01}} & \mapsto \quad b_{0^\infty}^{\alpha} \\
	\ttt{b}_{0^\infty}^{\beta_{01}} & \mapsto \quad 1 \\
	\ttt{c}_{0^\infty}^{\beta_{01}} & \mapsto \quad d_{0^\infty}^{\alpha} \\
	\ttt{d}_{0^\infty}^{\beta_{01}} & \mapsto \quad c_{0^\infty}^{\alpha} 
	\end{cases}
	\] 
	Using algorithms $\alpha$ and $\beta_{01}$, we can show that $f$ is well defined group isomorphism. ${G}_{0^\infty}^{\alpha}$ is virtually $\mathcal{L}$ with index $2$ by \cite{BG14}.
\end{proof} 

\begin{proposition}\label{alpha-gp-omega1}
	Let $\omega \in \{0,1\}^\mathbb{N}$. Then $\ao{G}^\alpha$ contains $\mathcal{L}$ as a subgroup and is an extension of $\ao{G}$ by $\bigoplus_{\mathbb{Z}}{\mathbb{Z}_2}$ and 
\end{proposition}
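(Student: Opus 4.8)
The plan is to present $\ao{G}^\alpha$ as an extension of $\ao{G}$ whose kernel is the normal closure of a single involution, and to exhibit $\mathcal{L}$ inside that kernel together with the element $ax$. Throughout I take $\omega$ to have both $0$ and $1$ occurring infinitely often, which is the case relevant to theorem \ref{modified-main}; if $\omega$ is eventually constant then every $\sigma^m\omega$ is ultimately $0^\infty$ or $1^\infty$ and one is reduced to proposition \ref{alpha-gp-omega2}. First I would apply proposition \ref{compare}: since $\omega \in \Omega_1$, there is a surjection $\phi : \ao{G}^\alpha \twoheadrightarrow \ao{G}$ with nontrivial kernel. The point is that for $\omega \in \{0,1\}^\mathbb{N}$ every symbol yields $b_n = P$, whence $b_\omega = x$ and $\ao{b} = 1$ in $\ao{G}$, while $\aoa{b} \neq 1$ in $\ao{G}^\alpha$. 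Since every shift $\sigma^m\omega$ again lies in $\{0,1\}^\mathbb{N}$ and is non-constant, the only length-one generator that becomes trivial in any $\as{G}{m}$ is $\as{b}{m}$, the letters $c,d,\ttt{c},\ttt{d}$ all surviving. Using this I would show $\ker\phi = \left<\left< \aoa{b} \right>\right>$, mirroring the first half of lemma \ref{word}.

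A direct computation gives the decomposition $\aoa{b} = (1,\aso{b}^\alpha)$, so $\aoa{b}$ is an involution of the shape needed to run the $v_n$-construction of (\ref{equation:vn}); I would set $v_n := v_n(\aoa{b})$. Adapting proposition \ref{vn_proporties}, I would verify that the $v_n$ are nontrivial commuting involutions, that $ax$ conjugates $v_n$ to $v_{n+1}$, and that at depth $m$ the nucleus of $v_n$ carries a single letter of the form $\as{b}{m}$ at a position depending injectively on $n$. This last fact forces the $v_n$ to be pairwise distinct and to satisfy no relations beyond $v_n^2=1$ and mutual commutativity, so that $\left< v_n : n \in \mathbb{Z} \right> \cong \bigoplus_{\mathbb{Z}} \mathbb{Z}_2$. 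Because $ax$ has infinite order (as used in proposition \ref{alpha-gp-omega2}) and $\phi(ax) \neq 1$, we get $\left< v_n \right> \cap \left< ax \right> = 1$, and the shift action produces $\left< v_n, ax \right> = \left< v_n \right> \rtimes \left< ax \right> \cong \mathbb{Z}_2 \wr \mathbb{Z} = \mathcal{L}$, the desired subgroup.

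For the extension I would establish the reverse inclusion $\ker\phi \subseteq \left< v_n : n \in \mathbb{Z} \right>$, the analogue of the second half of lemma \ref{word}. Given $g \in \ker\phi$ with reduced representative $W$, its nucleus at sufficient depth consists only of $1$'s and letters $\as{b}{m}$; I would then assemble a product of the words $V(\aoa{b})_{i_1 \cdots i_n}$ whose nucleus matches that of $W$ letter-for-letter, forcing $g$ to lie in $\left< v_n \right>$. Together with the first paragraph this yields $\ker\phi \cong \bigoplus_{\mathbb{Z}} \mathbb{Z}_2$ and the short exact sequence $1 \to \bigoplus_{\mathbb{Z}} \mathbb{Z}_2 \to \ao{G}^\alpha \to \ao{G} \to 1$, which exhibits $\ao{G}^\alpha$ as the claimed extension.

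The main obstacle is that, in contrast with the constant oracle of proposition \ref{alpha-gp-omega2}, the element $\aoa{b}$ is not strictly self-similar: its nontrivial section lands in $\aso{G}^\alpha$ rather than in $\ao{G}^\alpha$, so neither proposition \ref{vn_proporties} nor the identities $V_{i_1 \cdots i_n} = v_k$ apply verbatim. I expect the real work to be in tracking this drift of the oracle under $\sigma$ through the nucleus computation: confirming that the section at depth $m$ is exactly $\as{b}{m}$, that it is genuinely nontrivial because $\sigma^m\omega \in \{0,1\}^\mathbb{N}$ is non-constant for every $m$, and that the surviving letters sit at sufficiently separated positions to guarantee both the distinctness of the $v_n$ and the faithfulness of the identification $\ker\phi \cong \bigoplus_{\mathbb{Z}} \mathbb{Z}_2$.
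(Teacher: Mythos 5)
Your proposal is correct and follows essentially the same route as the paper: identify $\ker\phi=\left<\left<\aoa{b}\right>\right>=\left<v(\aoa{b})_n \mid n\in\mathbb{Z}\right>\cong\bigoplus_{\mathbb{Z}}\mathbb{Z}_2$ by the nucleus argument of lemma \ref{word}, then adjoin $ax$ to get $\mathcal{L}$ inside $\ao{G}^\alpha$. Your explicit restriction to $\omega\in\Omega_1$ and your attention to the oracle drift $\aoa{b}=(1,\aso{b}^\alpha)$ are details the paper's proof leaves implicit, but they do not change the argument.
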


\begin{proof}
	Let $\omega \in \{0,1\}^\mathbb{N}$. Let $\ttt{G} := \ao{G}^\alpha = \left< a,b,c,d,x,\ttt{b},\ttt{c},\ttt{d} \right>$ and let $G := \ao{G}$. We will drop the subscript $\omega$ and superscript $\alpha$ for the convenience. Also we will use the same letters for generating sets of $\ttt{G}$ and $G$ since there will be no ambiguity. Note that in $G$ we have $b = x$ and $\ttt{b} =1$. Let $\phi$ be the surjection from $\ttt{G}$ to $G$ described in proposition \ref{compare}. Then by a similar argument as of the proof of lemma \ref{word}, $Ker(\phi) = \left< \left< \ttt{b}\right> \right> = \left< v(\ttt{b})_n | n \in \mathbb{Z} \right> \cong \bigoplus_{\mathbb{Z}}{\mathbb{Z}_2}$. Hence $\ttt{G}$ is an extension of $G$ by $\bigoplus_{\mathbb{Z}}{\mathbb{Z}_2}$. Also since $Ker(\phi) \cap \left< ax \right> = \left< 1 \right>$ and $ax$ acts on $Ker(\phi)$ by shifting, $Ker(\phi) \rtimes \left< ax \right> \cong \mathcal{L}$ is a subgroups of $\ttt{G}$.
\end{proof}

\begin{proof}[Proof of Theorem \ref{modified-main}]
	Note that for any $\omega \in \Omega$, $\ao{G}$ is commensurable to $(\as{G}{N})^{2^N}$. This together with propositions \ref{alpha-gp-omega2}, \ref{beta-gp-omega2} and \ref{alpha-gp-omega1} proves the result.
\end{proof}

\section{$\mathbf{\ao{G} \in \mathcal{M}_8}$} \label{ydescription}

Recall the notation introduced in equation (\ref{equation:sets}).

\begin{align}
\mathcal{X}  = &\{ (\ao{G},\ao{S}) \}_{\omega \in \Omega} \nonumber \\ 
\mathcal{X}_i  = &\{ (\ao{G},\ao{S}) \}_{\omega \in \Omega_i} \text{ ; for } i = 0, 1, 2 \nonumber\\
\mathcal{X}_i^\alpha = &\{ (\ao{G},\ao{S}) \}_{\omega \in \Omega_i} \text{ ; for } i = 1, 2 \nonumber\\
\mathcal{X}_2^\beta  = &\{ (\ao{G}^\beta,\ao{S}^\beta) \; | \; \beta \in \{ \beta_{01}, \beta_{12}, \beta_{20} \} \; \}_{\omega \in \Omega_2}  \nonumber\\
\mathcal{Y}  = &\mathcal{X}_0 \cup \mathcal{X}_1^\alpha \cup \mathcal{X}_2^\alpha   \nonumber
\end{align}
\\
Then $\mathcal{X}$ is the disjoint union of $\mathcal{X}_0, \mathcal{X}_1, \mathcal{X}_2$. In order to prove the theorem \ref{distinct-main} we have following propositions.

\begin{proposition}\label{different-omega}
	Generalized overgroups and modified overgroups corresponding to different oracles $\omega$, are different. 
\end{proposition}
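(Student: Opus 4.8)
The plan is to distinguish the various groups by exploiting the fact that membership in each family is governed by which symbols appear infinitely often in the oracle $\omega$, and that the defining algorithm (none, $\alpha$, or $\beta_{ij}$) is detectable from the finite balls of the Cayley graph. The statement asserts two things: first, that groups attached to different oracles are distinct; second, that the generalized overgroup $\ao{G}$, the $\alpha$-modified overgroup $\aoa{G}$, and the $\beta_{ij}$-modified overgroup $\aob{G}$ are pairwise distinct whenever the relevant constructions both make sense for the same $\omega$. I would organize the argument around the key invariant supplied by Proposition \ref{compare}: the modifications introduce genuinely new relations, so $\aoa{G}$ (resp. $\aob{G}$) properly surjects onto $\ao{G}$ with nontrivial kernel in exactly the regimes $\Omega_1 \cup \Omega_2$ (resp. $\Omega_2$), and coincides with $\ao{G}$ otherwise.

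First I would treat the \emph{different oracle} part. Given $\omega \neq \eta$, pick the smallest index $m$ at which $\omega_m \neq \eta_m$. The generators $b_\omega, c_\omega, d_\omega$ are built coordinatewise from $\omega$ (via the $P/I$ pattern), so the decomposition of a short word into depth $m$ will differ between the two oracles: concretely, one can produce a word $W$ of bounded length whose depth-$m$ nucleus is the identity under one oracle but not the other, because at level $m$ one of $b,c,d$ acts as $P$ for $\omega$ and as $I$ for $\eta$ (or vice versa). This yields a relation holding in one group and failing in the other, hence a difference already visible in a finite ball of the Cayley graph, so the two marked groups are distinct points of $\mathcal{M}_8$. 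The same coordinatewise argument applies verbatim to the modified overgroups, since the generators are defined by the identical $P/I$ pattern and only the word-problem algorithm changes.

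Next I would handle the \emph{different construction} part for a fixed $\omega$. When $\omega \in \Omega_0$ there is nothing to prove since Proposition \ref{compare} gives $\aoa{G} = \ao{G}$ and the $\beta$-construction is not defined. When $\omega \in \Omega_1$, Proposition \ref{compare} gives $\aob{G} = \ao{G}$ but $\aoa{G}$ surjects properly onto $\ao{G}$; the kernel is nontrivial (e.g. it contains an element like $\as{b}{N}$ that dies in $\ao{G}$ but survives in $\aoa{G}$, as in the proof of Proposition \ref{compare}), so $\aoa{G} \neq \ao{G} = \aob{G}$ as abstract groups, hence a fortiori as marked groups. When $\omega \in \Omega_2$ all three constructions are genuinely different: $\aoa{G}$ and $\aob{G}$ each surject properly onto $\ao{G}$, and one checks $\aoa{G} \neq \aob{G}$ by comparing kernels — the $\alpha$-kernel contains the classes of all of $d, \ttt{b}, \ttt{c}$ (three $\mathbb{Z}_2$ factors, cf.\ Lemma \ref{word}) whereas the $\beta_{ij}$-kernel only kills a single generator, so the kernels have different ranks and the groups cannot be isomorphic with the marked generating set.

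The main obstacle will be making the ``nontrivial kernel distinguishes the marked groups'' step rigorous at the level of the metric on $\mathcal{M}_8$ rather than merely abstractly: two non-isomorphic abstract groups are automatically distinct marked groups, but I want to exhibit an explicit word of controlled length witnessing the difference in a finite ball, so that the distinctness is visibly compatible with the convergence statements used later. Concretely, I would trace through the algorithm definitions to produce, for each pair of constructions, a specific short word $W$ that the one algorithm accepts (returns identity) and the other rejects — the candidates being the generators $\ttt{b}, \ttt{c}, d$ and their $v_n$-translates whose depth-$n$ nuclei are explicitly described in Proposition \ref{vn_proporties} and the surrounding discussion. Verifying that these words have the asserted nuclei and that the length bound $|W_{i_1\cdots i_N}| \le |W|/2^N + 1 - 1/2^N$ forces the nucleus to be read off correctly is the only genuinely computational part, and everything else reduces to bookkeeping on the three regimes $\Omega_0, \Omega_1, \Omega_2$.
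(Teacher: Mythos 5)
The core claim of this proposition --- that different oracles give different marked groups --- is handled in the paper by a one-line reduction: the classical group $G_\omega=\langle a,b_\omega,c_\omega,d_\omega\rangle$ (resp.\ its modification) sits inside $\ao{G}$ (resp.\ $\aoa{G}$, $\aob{G}$) as the marked subgroup generated by four of the eight marked generators, and \cite{Gri84} already establishes that different oracles give different classical groups and different modified classical groups; an equality of the eight-generated marked groups would restrict to an equality of the four-generated ones, a contradiction. Your proposal does not use this reduction and instead tries to build a distinguishing word directly, and that is where the gap is. From ``$\omega_m\neq\eta_m$, so at level $m$ one of $b,c,d$ acts as $P$ for one oracle and as $I$ for the other'' you conclude that some bounded-length word has trivial nucleus under one oracle but not the other. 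But two marked groups differ only if some word is \emph{trivial} in one and \emph{nontrivial} in the other; a word whose depth-$m$ sections differ between the two oracles is typically nontrivial in both, since triviality of a section such as $d_{\sigma^{m-1}\eta}$ depends on the entire tail of $\eta$, not on $\eta_m$ alone. Producing an actual relation of one group that fails in the other is precisely the nontrivial content of the injectivity statement in \cite{Gri84}, and your sketch neither carries out that construction nor cites it. In principle your route can be completed (e.g.\ with words in the spirit of the $V_{i_1\cdots i_n}$ machinery, or by tracking the orders of elements like $ad$), but as written the key step is asserted rather than proved.

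Two smaller remarks. First, the proposition concerns only \emph{different oracles}; the pairwise distinctness of $\ao{G}$, $\aoa{G}$, $\aob{G}$ for a \emph{fixed} $\omega$ is dealt with separately in the paper (Corollary \ref{distinct-middle} via growth, and Proposition \ref{QR}), so that half of your write-up is out of scope here. Second, within that extra half, the argument that $\aoa{G}\neq\aob{G}$ for $\omega\in\Omega_2$ ``because the kernels have different ranks'' does not work: both kernels are countably generated elementary abelian $2$-groups ($\bigoplus_{\mathbb{Z}}\mathbb{Z}_2^3$ versus $\bigoplus_{\mathbb{Z}}\mathbb{Z}_2$), hence abstractly isomorphic; the paper instead exhibits an explicit word $W(ij)$ that one algorithm accepts and the other rejects.
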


\begin{proof}
	Note that the classical Grigorchuk's groups and their modifications are embedded in generalized overgroups and modified overgroups. By \cite{Gri84}, different oracles $\omega$ give rise to different classical Grigorchuk's groups and their modifications. Therefore by extending the generation set, we get the result.  
\end{proof}

Form the above proposition we can see that the sets $\mathcal{X}_0, (\mathcal{X}_1 \cup \aaa{\mathcal{X}}_1), (\mathcal{X}_2 \cup \aaa{\mathcal{X}}_2 \cup \bbbb{\mathcal{X}}_2)$ are disjoint. This together with corollary \ref{growth-main}, we get

\begin{corollary}\label{distinct-middle}
	$\mathcal{X}_0, \mathcal{X}_1, \mathcal{X}_2, \aaa{\mathcal{X}}_1, (\aaa{\mathcal{X}}_2 \cup \bbbb{\mathcal{X}}_2)$ are disjoint.
\end{corollary}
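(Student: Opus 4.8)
The plan is to prove Corollary~\ref{distinct-middle} by combining the partial disjointness from Proposition~\ref{different-omega} with the growth dichotomy from Corollary~\ref{growth-main}. First I would record what Proposition~\ref{different-omega} actually gives: since different oracles produce distinct (modified) overgroups, the three \emph{families} indexed by the three subsets $\Omega_0,\Omega_1,\Omega_2$ of $\Omega$ are internally parametrized injectively, and moreover $\mathcal{X}_0$, $\mathcal{X}_1\cup\aaa{\mathcal{X}}_1$, and $\mathcal{X}_2\cup\aaa{\mathcal{X}}_2\cup\bbbb{\mathcal{X}}_2$ are mutually disjoint, because these three blocks come from the three disjoint oracle-sets $\Omega_0,\Omega_1,\Omega_2$ and no group in one block can equal a group in another (their underlying classical Grigorchuk groups already differ by the oracle argument). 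So the only refinements left to establish are the separations \emph{within} the second and third blocks: that $\mathcal{X}_1$ and $\aaa{\mathcal{X}}_1$ are disjoint, and that $\mathcal{X}_2$ is disjoint from $\aaa{\mathcal{X}}_2\cup\bbbb{\mathcal{X}}_2$.

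The key step is to separate these by the growth invariant supplied by Corollary~\ref{growth-main}, which is a group-theoretic (hence marking-independent) invariant and therefore distinguishes points of $\mathcal{M}_8$ whenever the growth types differ. For the $\Omega_1$ block I would note that by Corollary~\ref{growth-main} every group in $\mathcal{X}_1$ has intermediate growth, while every group in $\aaa{\mathcal{X}}_1$ has exponential growth; two groups of different growth type cannot be isomorphic, so $\mathcal{X}_1\cap\aaa{\mathcal{X}}_1=\varnothing$. The same reasoning handles the $\Omega_2$ block: groups in $\mathcal{X}_2$ have polynomial growth, whereas all groups in $\aaa{\mathcal{X}}_2\cup\bbbb{\mathcal{X}}_2$ have exponential growth by Corollary~\ref{growth-main}, forcing $\mathcal{X}_2\cap(\aaa{\mathcal{X}}_2\cup\bbbb{\mathcal{X}}_2)=\varnothing$. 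Assembling: the three outer blocks are disjoint by Proposition~\ref{different-omega}, and inside each block the modified part peels off from the unmodified part by growth, which yields that $\mathcal{X}_0,\mathcal{X}_1,\mathcal{X}_2,\aaa{\mathcal{X}}_1,(\aaa{\mathcal{X}}_2\cup\bbbb{\mathcal{X}}_2)$ are pairwise disjoint.

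The only subtlety — which I would call the main (though mild) obstacle — is making sure growth genuinely separates marked-group \emph{points} of $\mathcal{M}_8$ and not merely abstract isomorphism types: here it suffices to observe that the growth function is an isomorphism invariant and that two marked groups $(G,S),(G',S')$ representing the same point of $\mathcal{M}_8$ have isomorphic underlying groups, so differing growth types indeed give distinct points. A second point to keep honest is that Corollary~\ref{growth-main} must apply uniformly across the whole of each family (all $\omega$ in the relevant $\Omega_i$), which it does, since its statement is phrased for the entire sets $\mathcal{X}_0\cup\mathcal{X}_1$, $\mathcal{X}_2$, and $\aaa{\mathcal{X}}_1\cup\aaa{\mathcal{X}}_2\cup\bbbb{\mathcal{X}}_2$. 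Everything else is bookkeeping, so the corollary follows immediately by combining the two cited results as above.
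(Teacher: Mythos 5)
Your proposal is correct and follows essentially the same route as the paper: Proposition~\ref{different-omega} gives the disjointness of the three oracle-indexed blocks $\mathcal{X}_0$, $\mathcal{X}_1\cup\aaa{\mathcal{X}}_1$, $\mathcal{X}_2\cup\aaa{\mathcal{X}}_2\cup\bbbb{\mathcal{X}}_2$, and Corollary~\ref{growth-main} then separates the modified from the unmodified families within each block via the growth type. Your added remark that growth is an isomorphism invariant and hence separates points of $\mathcal{M}_8$ is a correct (if implicit in the paper) bit of extra care, not a different argument.
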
 

Now let us prove $\aaa{\mathcal{X}}_2, \bbbb{\mathcal{X}}_2$ are disjoint.

\begin{proposition}\label{QR}
	$\aaa{\mathcal{X}}_2, \bbbb{\mathcal{X}}_2$ are disjoint. In fact for $\omega \in \Omega_2$ with infinitely many $i$'s, the groups $\ao{G}^\alpha, \ttt{G}_\omega^{\beta_{ij}}$ and $\ttt{G}_\omega^{\beta_{ik}}$ are different. 
\end{proposition}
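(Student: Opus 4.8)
The plan is to distinguish the three groups as \emph{marked} groups, i.e. as points of $\mathcal{M}_8$: for each I would produce a word in the shared generating set $\ao{S}$ that represents the identity in that group but not in the other two. This is the essential content, because by Theorem \ref{modified-main} the two groups $\ttt{G}_\omega^{\beta_{ij}}$ and $\ttt{G}_\omega^{\beta_{ik}}$ are both virtually $(\mathcal{L})^{2^N}$ and are presumably abstractly isomorphic, so only the marking can separate them. First I would reduce the disjointness of the two \emph{sets} to this fixed-oracle statement: since equal marked groups are abstractly isomorphic, Proposition \ref{different-omega} shows that a point common to $\aaa{\mathcal{X}}_2$ and $\bbbb{\mathcal{X}}_2$ would have to come from a single oracle $\omega \in \Omega_2$ with infinitely many $i$'s, so it suffices to separate $\ao{G}^\alpha$, $\ttt{G}_\omega^{\beta_{ij}}$ and $\ttt{G}_\omega^{\beta_{ik}}$ for such a fixed $\omega$.

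The mechanism is the difference between the three acceptance conditions of Section \ref{algo}: algorithm $\alpha$ calls a word trivial exactly when its nucleus consists of empty words, while $\beta_{ij}$ in addition tolerates copies of $e_{ij}$ and $\beta_{ik}$ copies of $e_{ik}$, where $e_{ij}\neq e_{ik}$ are distinct letters among $\{\ttt{b},\ttt{c},\ttt{d}\}$. Hence any word $W_{ij}$ whose nucleus consists of empty words together with at least one $e_{ij}$ (and nothing else) is trivial in $\ttt{G}_\omega^{\beta_{ij}}$ but nontrivial in $\ao{G}^\alpha$ (its nucleus is not all empty) and in $\ttt{G}_\omega^{\beta_{ik}}$ (its nucleus contains the letter $e_{ij}$, which $\beta_{ik}$ forbids). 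A symmetric witness $W_{ik}$, with nucleus built only from empty words and $e_{ik}$'s, separates $\ttt{G}_\omega^{\beta_{ik}}$ from the other two. Producing $W_{ij}$ and $W_{ik}$ therefore separates all three marked groups pairwise, since the discrepancy already appears inside a ball of the Cayley graph.

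It remains to construct the witnesses. When $\omega=i^\infty$ the generator $e_{ij}$ itself works: a direct computation (of the type behind Proposition \ref{vn_proporties}) shows that its depth-$n$ decomposition has nucleus $(1,\dots,1,e_{ij})$, so $e_{ij}=1$ in $\ttt{G}_{i^\infty}^{\beta_{ij}}$ yet $e_{ij}\neq 1$ in $\ttt{G}_{i^\infty}^{\alpha}$ and $\ttt{G}_{i^\infty}^{\beta_{ik}}$; concretely, for $i=0$ one has $\ttt{b}=(1,\ttt{b})$ and $\ttt{c}=(1,\ttt{c})$, giving witnesses for $\beta_{01}$ and $\beta_{20}$. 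For a general $\omega$ that equals $i$ only after position $N$, I would push this clean witness into the eventually-constant part: since $\sigma^N\omega=i^\infty$, the level-$N$ sections are governed by the oracle $i^\infty$, and using self-similarity together with the recursive construction (\ref{equation:V words}) I would build an element supported at a single level-$N$ vertex whose section there is the clean $e_{ij}$-element of $\ttt{G}_{i^\infty}$; its total nucleus then consists only of empty words and $e_{ij}$'s. (Equivalently, the identity-on-generators surjection $\ao{G}^\alpha\twoheadrightarrow\ttt{G}_\omega^{\beta_{ij}}$, well defined because every defining relation of the former is one of the latter, is non-injective, and each nontrivial kernel element is represented by exactly such a word.)

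The hard part will be this general-$\omega$ construction. For an oracle with a nontrivial prefix the generator $e_{ij}$ no longer has a clean nucleus: the non-$i$ symbols in the initial segment introduce stray occurrences of $a$ -- for example $\ttt{b}_{20^\infty}=(a,\ttt{b}_{0^\infty})$ -- and these are forbidden by \emph{every} one of the three algorithms, so the naive length-one witness is destroyed. The delicate step is therefore to localize the witness strictly within the constant-$i$ region and to check that its nucleus contains neither an $a$ nor an $e_{ik}$; carrying this out rigorously, rather than by a single contraction, is where the branch/self-similar structure must be used.
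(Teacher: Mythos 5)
Your strategy is the paper's strategy: separate the three marked groups by exhibiting, for each pair $\{i,j\}$, a word $W(ij)$ whose nucleus consists only of empty words and $e_{ij}$'s (not all empty), so that $W(ij)=1$ in $\ttt{G}_\omega^{\beta_{ij}}$ but $W(ij)\neq 1$ in $\ao{G}^\alpha$ and in $\ttt{G}_\omega^{\beta_{ik}}$. Your reduction of the disjointness of the sets $\aaa{\mathcal{X}}_2,\bbbb{\mathcal{X}}_2$ to the fixed-oracle statement via Proposition \ref{different-omega} is fine, and you correctly diagnose the obstruction for a general $\omega\in\Omega_2$: a symbol $\neq i$ in the prefix puts an $a$ into the nucleus of the bare generator $e_{ij}$ (e.g.\ $\ttt{b}_{20^\infty}=(a,\ttt{b}_{0^\infty})$), which every one of the three algorithms rejects.

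The gap is that you stop exactly where the proof's real work begins: the witness is actually produced only for $\omega=i^\infty$, and for general $\omega$ you offer a plan rather than a construction. Moreover the plan as stated --- an element whose level-$N$ decomposition is trivial except at a single vertex, where the section is the clean $e_{ij}$ of $\ttt{G}_{i^\infty}$ --- is a nontrivial rigidity claim that you do not justify, and it is not what happens in the paper. Writing $\omega=\omega_1\cdots\omega_n 2^k\eta_1\cdots\eta_r 0^\infty$ (for $e_{01}=\ttt{b}$), the paper sets $X_m=\ttt{b}$ or $b$ according to whether $\omega_m=2$, forms the iterated conjugates $Y_m=X_n^{X_{n-1}^{\cdots X_m^a}}$, and takes $W(01)=\bigl((\ttt{b}Y_1)^2\bigr)^{2^k}$; the squaring/conjugation kills the stray $a$'s level by level, and the power $2^k$ absorbs the block of $2$'s, at the cost of the nucleus containing $\ttt{b}$ at \emph{many} leaves rather than one. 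So the correct witness is not supported at a single vertex, and a single-vertex version would need a separate argument (essentially that the relevant rigid stabilizer of $\ao{G}^{\beta_{ij}}$ contains such an element), which you have not supplied. The surjection $\ao{G}^\alpha\twoheadrightarrow\ttt{G}_\omega^{\beta_{ij}}$ you mention is well defined but does not help: proving it non-injective \emph{is} the problem of constructing such a word, so that remark is circular. In short: right mechanism, right difficulty identified, but the decisive construction is missing.
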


In order to prove this, we will construct word $W(ij)$ such that it's nucleus consists only of $1$ and $e_{ij}$, with not all $1$'s. For ease of writing let us assume $\omega \in \Omega_2$ with infinitely many 0's. We will construct the word $W(01)$. Recall that $e_{01}=\ttt{b}$. Let $\omega = \omega_1 \omega_2 \hdots \omega_n 2^k \eta_1 \eta_2 \hdots \eta_r 0^\infty$, where $\omega_n \neq 2, \eta_r \neq 0$ and $\eta_i \neq 2$ for all $i$. Now define $X_i, Y_i, Z_i$ for $i = 0,1,\hdots, n$ as follows;
\\
$X_i = \ttt{b}$ if $\omega_i = 2$ and $X_i = b$ if $\omega_i \neq 2$, $Y_i = X_n^{X_{n-1}^{\hdots X_i^a}}$, and $Z_i = (\ttt{b}Y_i)^2$

Now consider the word $W=W(01) = (Z_1)^{2^k}$. Then the decomposed diagram of $W$ of depth $n+k$ is given in the figure \ref{fig:decomposition} and thus it's nucleus consists of only $1,\ttt{b}$.

Using similar constructions, we can construct words $W(02), W(12)$.

\tikzset{
	s/.style={circle,draw,inner sep=0,fill=black},
	l/.style={circle,draw,inner sep=0,fill=black,xshift=10},
	rb/.style={circle,draw=none,inner sep=0,fill=black},
}
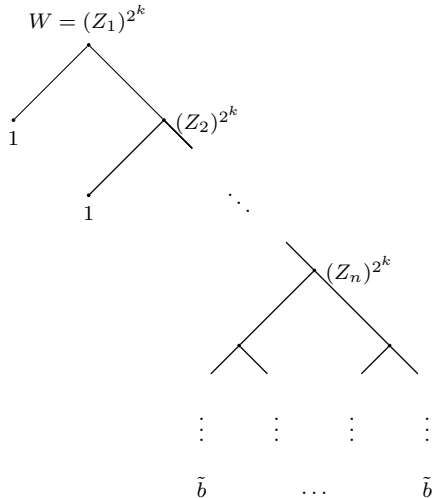
\begin{figure}[t!]
	\centering
	\begin{tikzpicture}[scale=1,font=\footnotesize]
	\tikzstyle{level 1}=[level distance=10mm,sibling distance=20mm]
	\tikzstyle{level 2}=[level distance=10mm,sibling distance=20mm]
	\tikzstyle{level 3}=[level distance=10mm,sibling distance=25mm]
	\tikzstyle{level 4}=[level distance=10mm,sibling distance=20mm]
	\node(-1)[s,label=above:{$W = (Z_1)^{2^k}$}]{}
	child{node(0)[s,label=below:{$1$}]{}
	}
	child{node(1)[s,label=right:{$(Z_2)^{2^k}$}]{}
	};
	\node(10)[circle,draw,inner sep=0,fill=black, xshift=-10mm, yshift=-10mm,label=below:{$1$}] at ($(1)$) {};
	\draw[line width=0.5 pt] (1) to (10);
	\node(1-11)[xshift=5mm, yshift=-5mm,label=center:{}] at ($(1)$) {};
	\draw[line width=0.5 pt] (1) to (1-11);
	\node(1--11)[xshift=10mm, yshift=-10mm,label=center:{$\ddots$}] at ($(1)$) {};
	\draw[line width=0.5 pt] (1) to (1-11);
	\node(1---11)[xshift=15mm, yshift=-15mm,label=center:{}] at ($(1)$) {};
	\draw[line width=0.5 pt] (1) to (1-11);
	\node(11)[circle,draw,inner sep=0,fill=black, xshift=20mm, yshift=-20mm,label=right:{$(Z_n)^{2^k}$}] at ($(1)$) {};
	\draw[line width=0.5 pt] (1---11) to (11);
	
	\node(110)[circle,draw,inner sep=0,fill=black, xshift=-10mm, yshift=-10mm,label=right:{}] at ($(11)$) {};
	\node(111)[circle,draw,inner sep=0,fill=black, xshift=10mm, yshift=-10mm,label=right:{}] at ($(11)$) {};
	\draw[line width=0.5 pt] (11) to (110);
	\draw[line width=0.5 pt] (11) to (111);	
	
	\node(1100)[xshift=-5mm, yshift=-5mm,label=below:{$\vdots$}] at ($(110)$) {};
	\draw[line width=0.5 pt] (110) to (1100);
	\node(1101)[xshift=5mm, yshift=-5mm,label=below:{$\vdots$}] at ($(110)$) {};
	\draw[line width=0.5 pt] (110) to (1101);

	\node(1110)[xshift=-5mm, yshift=-5mm,label=below:{$\vdots$}] at ($(111)$) {};
	\draw[line width=0.5 pt] (111) to (1110);
	\node(1111)[xshift=5mm, yshift=-5mm,label=below:{$\vdots$}] at ($(111)$) {};
	\draw[line width=0.5 pt] (111) to (1111);
	
	\node(11100)[xshift=0mm, yshift=-10mm,label=below:{$\ttt{b}$}] at ($(1100)$) {};
	\node(11111)[xshift=0mm, yshift=-10mm,label=below:{$\ttt{b}$}] at ($(1111)$) {};
	\node(11011)[xshift=0mm, yshift=-27mm,label=below:{$\hdots$}] at ($(11)$) {};
	\end{tikzpicture}
	\caption{Decomposition of $W = W(01)$ in to the depth $n+k$}
	\label{fig:decomposition}
\end{figure}

\begin{proof}[Proof of Proposition \ref{QR}]
	Suppose $\omega = \omega_1 \omega_2 \hdots \omega_n 2^k \eta_1 \eta_2 \hdots \eta_r 0^\infty$, where $\omega_n \neq 2, \eta_r \neq 0$ and $\eta_i \neq 2$ for all $i$ and consider the word $W = W(01)$ defined as above. Then $W$ represent the identity element in $\ttt{G}_\omega^{\beta_{01}}$ but not the identity in $\ao{G}^\alpha$ and $\ttt{G}_\omega^{\beta_{02}}$. Similarly using the word $W(02)$, we can show $\ao{G}^\alpha \neq \ttt{G}_\omega^{\beta_{02}}$.
\end{proof}

\begin{proof}[Proof of Theorem \ref{distinct-main}]
	Directly from proposition \ref{different-omega}, \ref{QR} and corollary \ref{distinct-middle}.
\end{proof}

Now we will prove theorem \ref{closure}. We will use few lemmas in order to do this.

\begin{lemma}\label{lim}
	Let $\omega, \omega^{(n)} \in \Omega$ for all $n \in \mathbb{N}$ and $\omega^{(n)} \rightarrow \omega$. Suppose $G = \lim {\ttt{G}_{\omega^{(n)}}}$ exists. Then $G = \ao{G}, \aoa{G}$ or $\aob{G}$. Moreover $G \in \mathcal{Y} \cup \mathcal{X}_1 \cup \bbbb{\mathcal{X}}_2$ and so $G \notin \mathcal{X}_2$.
	\begin{proof}
		If $\omega \in \Omega_0$, Since $\omega^{(n)} \rightarrow \omega$, using proposition \ref{oande} (1), we get $G = \ao{G}$. Now consider $\omega \in \Omega_1$; Let $N$ be the smallest index such that only two symbols appear after $N$-th position. If there are infinitely many $\omega^{(n)}$'s with all three symbols appearing after $N$-th position, then by proposition \ref{oande} (2), $G = \aoa{G}$. If all but finitely many $\omega^{(n)}$'s contain only two symbols after the $N$-th position, then by proposition \ref{oande} (3), $G = \ao{G}$.
		
		Finally consider $\omega \in \Omega_2$; Let $N$ be the smallest index such that only one symbol appear after the $N$-the position. If there are infinitely many $\omega^{(n)}$'s with all three symbols after the $N$-th position, then by proposition \ref{oande} (2), $G = \aoa{G}$. If all but finitely many $\omega^{(n)}$'s contain only two symbols, say $\{ i,j \}$, after the $N$-th position, then by proposition \ref{oande} (4), $G = \aob{G}$.
		
		From above, we can conclude that $G \in \mathcal{Y} \cup \mathcal{X}_1 \cup \bbbb{\mathcal{X}}_2$ and $G \notin \mathcal{X}_2$.
	\end{proof}
\end{lemma}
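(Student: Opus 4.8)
The plan is to use that $\ttt{G}_{\omega^{(n)}} \Rightarrow G$ means precisely that, for every radius $r$, the ball of radius $r$ in the Cayley graph of $\ttt{G}_{\omega^{(n)}}$ coincides with that of $G$ for all large $n$. Hence it suffices to fix, independently of $r$, one candidate among $\ao{G}, \aoa{G}, \aob{G}$ and to check that for each $r$ some large $n$ makes the radius-$r$ ball of $\ttt{G}_{\omega^{(n)}}$ agree with that of the candidate; matching every radius then forces $G$ to equal the candidate. The matching is delivered by Proposition \ref{oande}, whose prefix hypothesis $\omega^{(n)}_i = \omega_i$ for $i \le N$ is automatic for large $n$ since $\omega^{(n)} \to \omega$. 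The only twist is that in parts (2) and (4) the symbol-content hypothesis is carried by the first-named (unmodified) oracle, so I would feed $\omega^{(n)}$ into that slot and place the limit oracle $\omega$ in the modified slot.

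I would split on $\omega \in \Omega_0, \Omega_1, \Omega_2$. For $\omega \in \Omega_0$, given $r$ I choose $N > \log_2(2r)$ and then $M > N$ so that all three symbols already occur in $(N,M]$ of $\omega$; prefix agreement up to $M$ for large $n$ gives $\omega^{(n)}$ all three symbols after $N$ as well, so Proposition \ref{oande}(1) equates the radius-$r$ balls of $\ttt{G}_{\omega^{(n)}}$ and $\ao{G}$, whence $G = \ao{G} \in \mathcal{X}_0 \subset \mathcal{Y}$. For $\omega \in \Omega_1$ with recurrent symbols $\{i,j\}$, the governing dichotomy is whether the third symbol $k$ survives to arbitrary depth along the sequence: if for arbitrarily large $N$ there are $n$ with $\omega^{(n)}$ agreeing with $\omega$ up to $N$ and still carrying all three symbols after $N$, then Proposition \ref{oande}(2) matches $\ttt{G}_{\omega^{(n)}}$ with $\aoa{G}$, giving $G = \aoa{G} \in \mathcal{X}_1^\alpha \subset \mathcal{Y}$; otherwise the $\omega^{(n)}$ eventually carry only two symbols after a large $N$, and prefix agreement forces those to be exactly $\{i,j\}$, so Proposition \ref{oande}(3) gives $G = \ao{G} \in \mathcal{X}_1$. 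In either case $G \in \mathcal{Y} \cup \mathcal{X}_1$.

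The case $\omega \in \Omega_2$, with $\omega$ eventually equal to some $i$ from position $N_2$ on, is where I expect the real difficulty, and it is also where the clause $G \notin \mathcal{X}_2$ must be earned. The same dichotomy yields $G = \aoa{G} \in \mathcal{X}_2^\alpha \subset \mathcal{Y}$ when all three symbols survive deep, and $G = \aob{G} \in \bbbb{\mathcal{X}}_2$ via Proposition \ref{oande}(4) (using that $\omega$ has no third symbol after $N$) when the $\omega^{(n)}$ eventually carry exactly $\{i,j\}$ deep. The delicate remaining configuration is that $\omega^{(n)}$ eventually carries only the single symbol $i$ beyond a position bounded uniformly in $n$; combined with prefix agreement past $N_2$ this forces $\omega^{(n)} = \omega$ for all large $n$, so $\ttt{G}_{\omega^{(n)}}$ is eventually $\ao{G}$ and $G$ would land in $\mathcal{X}_2$. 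I would dispose of this by observing that it makes the sequence eventually constant, so $G$ is not a genuine limit point of $\mathcal{X}$ --- the only situation relevant to the isolatedness of $\mathcal{X}_2$ in Theorem \ref{closure} --- under which reading it is excluded and $G \in \mathcal{Y} \cup \mathcal{X}_1 \cup \bbbb{\mathcal{X}}_2$. The main obstacle is exactly this transposition-plus-dichotomy bookkeeping: feeding the symbol-rich approximant into Proposition \ref{oande}, and isolating the uniform single-symbol tail as the sole route into $\mathcal{X}_2$ while identifying it with an eventually constant sequence. Once the dichotomy is framed as whether extra symbols persist to arbitrary depth, choosing $N = N(r) > \log_2(2r)$ beyond the relevant stabilization indices and extracting large $n$ with both prefix agreement and the required symbol content is routine.
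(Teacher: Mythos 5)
Your proposal follows the paper's proof essentially verbatim: split on $\omega \in \Omega_0, \Omega_1, \Omega_2$, and within the latter two cases run the dichotomy on whether the approximating oracles $\omega^{(n)}$ retain extra symbols beyond the stabilization index $N$, invoking the four parts of Proposition \ref{oande} (with $\omega^{(n)}$ correctly fed into the unmodified slot) to identify the limit as $\ao{G}$, $\aoa{G}$ or $\aob{G}$. The one point where you go beyond the paper is the degenerate subcase in $\Omega_2$ where the $\omega^{(n)}$ eventually carry only the single symbol $i$ after position $N$: the paper's dichotomy silently omits it, and you correctly observe that prefix agreement then forces $\omega^{(n)} = \omega$, so the sequence is eventually constant and the conclusion $G \notin \mathcal{X}_2$ survives only under the reading that $G$ is a genuine limit point (which is exactly how the lemma is used in Theorem \ref{closure}) --- a legitimate patch of a real gap in the stated lemma rather than a defect of your argument.
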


\begin{proof}[Proof of Theorem \ref{closure} (1)]
	To the contrary, suppose there is an $\eta \in \Omega_2$ such that $\ttt{G}_\eta \in \mathcal{X}_2$ is a limit point. Then there exists a sequence $\{ G_{\omega^{(n)}} \}$ converging to $\ttt{G}_\eta$. Since $\Omega$ is compact, we may assume $\omega^{(n)} \rightarrow \omega$, for some $\omega \in \Omega$. By lemma \ref{lim}, $\ttt{G}_\eta = \lim \ttt{G}_{\omega^{(n)}} \notin \mathcal{X}_2$, which is a contradiction.
\end{proof}

\begin{proof}[Proof of Theorem \ref{closure} (3)(a)]
	Let $G \in \mathcal{Y}_\sharp = (\mathcal{X}_0 \cup \aaa{\mathcal{X}}_1 \cup \aaa{\mathcal{X}}_2)_\sharp$. Then there exists $\{\omega^{(n)}\} \subset \Omega$ such that $\ttt{G}_{\omega^{(n)}}^\alpha \Rightarrow G$. By compactness of $\Omega$ we may assume $\omega^{(n)} \rightarrow \omega$ for some $\omega \in \Omega$. Then $G = \ao{G}^\alpha$ by corollary \ref{limit}. This together with corollary \ref{limit} implies that \[ \omega^{(n)} \rightarrow \omega \iff \left( \ttt{G}_{\omega^{(n)}}^\alpha \Rightarrow \aoa{G} \right). \]
	Therefore $\mathcal{Y} \cong \Omega$ and $\mathcal{X}_0 \cong \Omega_0, \aaa{\mathcal{X}}_1 \cong \Omega_1, \aaa{\mathcal{X}}_2 \cong \Omega_2$. Thus, $\mathcal{Y}$ is homeomorphic to a Cantor set and $\mathcal{Y} = (\mathcal{X}_0)_\sharp = (\aaa{\mathcal{X}}_1)_\sharp = (\aaa{\mathcal{X}}_2)_\sharp$.
\end{proof}

\begin{proof}[Proof of Theorem \ref{closure} (3)(b)]
	First we will show $\mathcal{X}_\sharp \subset \mathcal{Y} \cup \mathcal{X}_1 \cup \bbbb{\mathcal{X}}_2$. Let $G$ be a limit point of $\mathcal{X}$. Thus there exists a sequence $\{ G_{\omega^{(n)}} \}$ converging to $G$. Since $\Omega$ is compact, we may assume $\omega^{(n)} \rightarrow \omega$, for some $\omega \in \Omega$. Then by lemma \ref{lim}, $G \in \mathcal{Y} \cup \mathcal{X}_1 \cup \bbbb{\mathcal{X}}_2$. Therefore $\mathcal{X}_\sharp \subset \mathcal{Y} \cup \mathcal{X}_1 \cup \bbbb{\mathcal{X}}_2$.
	
	Now we will show $\mathcal{Y} \cup \mathcal{X}_1 \cup \bbbb{\mathcal{X}}_2 \subset (\mathcal{X}_1)_\sharp$. Let $\omega \in \Omega$ and choose $\omega^{(n)} = \omega_1 \omega_2 \hdots \omega_n (012) (ij)^\infty$, for each $n$. Then using proposition \ref{oande} (2), we get $\ttt{G}_{\omega^{(n)}} \Rightarrow \ao{G}^\alpha$. So $\mathcal{Y} \subset (\mathcal{X}_1)_\sharp$. Let $\omega \in \Omega_1 \cup \Omega_2$ with finitely many $k$'s. Choose $\omega^{(n)} = \omega_1 \omega_2 \hdots \omega_n (ij)^\infty$, for each $n$. Using proposition \ref{oande} (4), we get $\ttt{G}_{\omega^{(n)}} \Rightarrow \ao{G}^{\beta_{ij}}$. So $\mathcal{X}_1 \cup \bbbb{\mathcal{X}}_2 \subset (\mathcal{X}_1)_\sharp$. Therefore $\mathcal{Y} \cup \mathcal{X}_1 \cup \bbbb{\mathcal{X}}_2 \subset (\mathcal{X}_1)_\sharp$.
	
	Using a similar argument by choosing $\omega^{(n)} = \omega_1 \omega_2 \hdots \omega_n (012) (i)^\infty$ and again choosing $\omega^{(n)} = \omega_1 \omega_2 \hdots \omega_n (ij) (i)^\infty$, we can show $\mathcal{Y} \cup \mathcal{X}_1 \cup \bbbb{\mathcal{X}}_2 \subset (\mathcal{X}_2)_\sharp$.
	
	Since $\mathcal{X}_1$ and $\mathcal{X}_2$ are subsets of $\mathcal{X}$, we get  $\mathcal{X}_\sharp = (\mathcal{X}_1)_\sharp = (\mathcal{X}_2)_\sharp  = \mathcal{Y} \cup \mathcal{X}_1 \cup \bbbb{\mathcal{X}}_2$. Corollary \ref{limit} together with proposition \ref{compare} implies that $(\mathcal{X}_1)_\sharp = (\bbbb{\mathcal{X}}_2)_\sharp$ and so we get the desired result.
\end{proof}

Now we will complete the proof of theorem \ref{closure}.

\begin{proof}[Proof of theorem \ref{closure} (2)]
	We already proved $\mathcal{Y}$ is homeomorphic to a Cantor set. Now let us prove that $\mathcal{X}_\sharp$ is also homeomorphic to a Cantor set. Note that the set $\mathcal{X}_\sharp$ is a perfect set. (That is a closed set with all its point being limit points). The space $\mathcal{M}_8$ is a totally disconnected compact metric space. Let us recall that any non empty, totally disconnected, compact, perfect metric space is homeomorphic to the Cantor set. Therefore, $\mathcal{X}_\sharp$ homeomorphic to the Cantor set.
\end{proof}


\end{document}